\theoremstyle{plain}
\newtheorem{theorem}{Theorem}[section]
\newtheorem{definition}[theorem]{Definition}
\newtheorem{notation}[theorem]{Notation}
\newtheorem{proposition}[theorem]{Proposition}
\newtheorem{corollary}[theorem]{Corollary}
\newtheorem{lemma}[theorem]{Lemma}
\newtheorem{remark}[theorem]{Remark}
\numberwithin{equation}{section}
\newtheorem*{theorem*}{Theorem}
\begin{document}
\title[]{Almost sure Assouad-like Dimensions of Complementary sets}
\author{Ignacio Garc\'ia, Kathryn Hare and Franklin Mendivil}
\address{Centro Marplatense de Investigaciones Matem\'{a}ticas, Facultad de
Ciencias Exactas y Naturales\\
and Instituto de Investigaciones F\'{\i}sicas de Mar del Plata (CONICET)\\
Universidad Nacional de Mar del Plata, Argentina }
\email{nacholma@gmail.com}
\address{Dept. of Pure Mathematics, University of Waterloo, Waterloo, Ont.,
Canada, N2L 3G1}
\email{kehare@uwaterloo.ca}
\address{Department of Mathematics and Statistics, Acadia University,
Wolfville, N.S. Canada, B4P 2R6}
\email{franklin.mendivil@acadiau.ca}
\thanks{The research of K. Hare is partially supported by NSERC Discovery
grant 2016:03719. The research of F. Mendivil is partially supported by
NSERC Discovery grant 2012:238549. I. Garc\'{\i}a and K. Hare thank Acadia
University for their hospitality when some of this research was done. I. Garc%
\'{\i}a thanks the hospitality of University of Waterloo.}
\subjclass[2010]{Primary: 28A78; Secondary 28A80}
\keywords{Assouad dimension, quasi-Assouad dimension, complementary sets,
Cantor sets, random sets}

\begin{abstract}
{Given a non-negative, decreasing sequence $a$ with sum $1$, we consider all
the closed subsets of $[0,1]$ such that the lengths of their complementary
open intervals are given by the terms of $a$, the so-called }complementary
sets{. In this paper we determine the almost sure value of the }$\Phi $-{%
dimensions of these sets given a natural model of randomness. The }$\Phi $%
-dimensions are intermediate Assouad-like dimensions which include the
Assouad and quasi-Assouad dimensions as special cases. The answers depend on
the size of $\Phi ,$ with one size behaving like the Assouad dimension and
the other, like the quasi-Assouad dimension.
\end{abstract}

\maketitle

\section{Introduction}

{The upper and lower Assouad dimensions were introduced by Assouad in \cite%
{A1,A2} and Larman in \cite{L1}. These dimensions were initially used in the
theory of embeddings of metric spaces into ${\mathbb{R}}^{n}$ (see \cite{MT}%
) but, together with their less extreme versions, the quasi-Assouad
dimensions introduced in \cite{CDW, LX}, have recently been extensively used
within the fractal geometry community; see, for example, \cite%
{FTrans,FTale,FYAdv, GH,KR,Luu} and the references cited in those papers. In 
\cite{GHMPhi}, the authors further generalized these notions, introducing a
range of dimensions that are intermediate between the box and Assouad
dimensions. One topic explored in \cite{GHMPhi} were the dimensional
properties of (deterministic) rearrangements of a Cantor set of zero
Lebesgue measure. In this paper, we continue this investigation, studying
the almost sure dimensional properties of random rearrangements. This
extends the work of Hawkes \cite{Haw} who studied the Hausdorff dimensions
of random rearrangements of Cantor sets.}

{The Assouad dimensions can roughly be thought of as refinements of the
box-counting dimensions where one \textquotedblleft
localizes\textquotedblright\ and takes the worst local behaviour. The
localization is accomplished by choosing a window size, $R$, and then
analyzing this window at a smaller scale $r$. The quasi-Assouad dimension
requires, in addition, that $r\leq R^{1+\delta }$ for some fixed $\delta >1$
and then lets $\delta \rightarrow 0$. Our refinement uses a dimension
function $\Phi $ and requires $r\leq R^{1+\Phi (R)}$, so we can very
precisely measure the local scaling behaviour of a set by varying $\Phi $ to
be adapted to the set in question. The extreme values of the }$\Phi $%
-dimensions are the box and Assouad dimensions. Both the quasi-Assouad and
Assouad dimensions are special examples and when $\Phi (R)\rightarrow 0$ as $%
R\rightarrow 0,$ the $\Phi $-dimensions lie between these two. An example is
constructed in \cite{GHMPhi} of a set where the range of $\Phi $-dimensions
is the full interval from the quasi-Assouad to Assouad dimensions. Fraser's
modified $\theta $-spectrum, studied in \cite{FTale,FYAdv}, is another
example of a $\Phi $-dimension. More generally, if $\Phi $ stays bounded
away from $0,$ then the $\Phi $-dimensions lie between the box and
quasi-Assouad dimensions. The definitions and basic properties of these
dimensions are given in Section \ref{sec:Notation}.

{Given $a=\{a_{j}\}$, a non-negative decreasing sequence with sum equal to
one, we define the class ${\mathcal{C}}_{a}$ to be the family of all closed
subsets of $[0,1]$ whose complement in $[0,1]$ consists of disjoint open
intervals with lengths given by the $a_{j}$. The sets in ${\mathcal{C}}_{a}$
are called the }complementary sets{\ of $a$ and all have zero Lebesgue
measure. Every compact subset of $[0,1]$ of Lebesgue measure zero belongs to
exactly one ${\mathcal{C}}_{a}$ and each ${\mathcal{C}}_{a}$ contains both
countable and uncountable sets. Thus it is natural to ask about the possible
dimensions in a given family. }

{Besicovitch and Taylor \cite{BT} were the first to study this problem for
the case of Hausdorff dimension. Among other things, they proved that the
set of attained Hausdorff dimensions for elements of ${\mathcal{C}}_{a}$ is
the closed interval $[0,\dim _{H}C_{a}]$, where $C_{a}$ is the Cantor set in 
$\mathcal{C}_{a}$. Recent work produced similar results for the packing
dimension where the set of attainable dimensions is $[0,\dim _{P}C_{a}],$
(see \cite{HMZ}) and the upper and lower }$\Phi $-{dimensions, where under
natural technical assumptions on the sequence }$a$, {the sets of attainable
dimensions are the intervals $[\overline{\dim }_{\Phi }C_{a},1]$ and $[0,%
\underline{\dim }_{\Phi }C_{a}]$ respectively, (see \cite{GHM} for the
Assouad dimensions and \cite{GHMPhi} for the more general }$\Phi $-dimensions%
{). }

{An alternative thread, started in \cite{Haw}, found the almost sure
Hausdorff dimension for a random element of ${\mathcal{C}}_{a},$ under a
very natural model of randomness. This was extended in \cite{Hu1,Hu2} where
the exact almost sure Hausdorff and packing dimension functions were found
for the same random model. Note that since the value of the dimension of a
set depends on the asymptotics of very fine scales, any dimensional
calculation will be a tail event and thus will have a constant value almost
surely (at least if the randomness is given by appropriately independent
choices). In this paper, we determine the upper and lower }$\Phi $%
-dimensions of these random rearrangements. Surprisingly, {the almost sure
behaviour of the dimension depends on the asymptotic \textquotedblleft
size\textquotedblright\ of $\Phi $. }In fact, it is this difference in the
almost sure behaviour which motivated us to study the $\Phi $-dimensions.

{Our main results, which can be found in Sections \ref{sec:asupperdim} and %
\ref{sec:aslowerdim}}, can be summarized as follows:


\begin{theorem*}
Let $a$ be a level comparable sequence and let $\Psi (x)=\log |\log x|/|\log
x|$.

(i) If $\Phi (x)>>\Psi (x)$ for $x$ near $0,$ then for a.e. $E\in \mathcal{C}%
_{a}$ we have 
\begin{equation*}
\overline{\dim }_{\Phi }E=\overline{\dim }_{\Phi }C_{a}\ \ \text{ and }\ \ 
\underline{\dim }_{\Phi }E=\underline{\dim }_{\Phi }C_{a}.
\end{equation*}

(ii) If $\Phi (x)<<\Psi (x)$ for $x$ near $0,$ then for a.e. $E\in \mathcal{C%
}_{a}$ we have 
\begin{equation*}
\overline{\dim }_{\Phi }E=1\ \ \text{ and}\ \ \ \underline{\dim }_{\Phi }E=0.
\end{equation*}
\end{theorem*}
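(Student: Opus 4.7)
My plan is to treat the two parts separately, since they require opposite probabilistic strategies, but both hinge on comparing $\Phi(R)|\log R|$ to $\log|\log R|$. In the random model (as in Hawkes), the complementary intervals $a_1,a_2,\dots$ are placed into $[0,1]$ in a uniformly random order, so computing either $\Phi$-dimension of $E$ reduces to controlling, over a dyadic grid of window sizes $R$, the supremum over windows of the $r$-covering number for $r\in[R^{1+\Phi(R)},R]$. The threshold function $\Psi$ appears naturally because there are essentially $|\log R|$ dyadic scales to check and the condition $r\le R^{1+\Phi(R)}$ buys $\Phi(R)|\log R|$ extra logarithmic room; Borel--Cantelli becomes summable exactly when $\Phi(R)|\log R|\gg \log|\log R|$.

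For part (i), I would show $\overline{\dim}_\Phi E\le \overline{\dim}_\Phi C_a$ a.s.\ (and symmetrically $\underline{\dim}_\Phi E\ge \underline{\dim}_\Phi C_a$) by a concentration argument. Fix $\alpha>\overline{\dim}_\Phi C_a$. For a given window of size $R$ and target scale $r\in[R^{1+\Phi(R)},R]$, the $r$-covering number of $E$ in that window is controlled by $1$ plus the number of gaps $a_j\ge r$ whose uniformly random position falls in the window. Because $a$ is level comparable, the expected count matches, up to bounded factors, the covering behavior of $C_a$, which is $O((R/r)^\alpha)$. A Chernoff bound then gives deviation probability at most $\exp(-c\Phi(R)|\log R|)$. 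Summing over the $|\log R|$ admissible pairs $(R,r)$ in a dyadic frame produces a term $|\log R|\exp(-c\Phi(R)|\log R|)$, which is summable along $R=2^{-n}$ precisely when $\Phi\gg\Psi$. Borel--Cantelli then yields the desired upper bound; the matching lower bound follows by localizing inside $C_a$ at a point realizing its $\Phi$-dimension and running the same estimate in reverse.

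For part (ii), the random rearrangement must be shown to have extreme dimensions. To obtain $\overline{\dim}_\Phi E=1$, I would, for each $R_n=2^{-n}$ in a suitably thinned subsequence, identify the event that some window of size $R_n$ contains nearly $R_n/r_n$ disjoint small intervals clustering at scale $r_n=R_n^{1+\Phi(R_n)}$, forcing the $r_n$-covering number in that window to be $(R_n/r_n)^{1-o(1)}$. Using the level-comparable control on $a$ and essentially negative association of the placements, the probability of such a cluster in a single window is at least $\exp(-C\Phi(R_n)|\log R_n|)$, and there are order $1/R_n$ disjoint candidate windows; the expected number of successful clusters is therefore at least $R_n^{-1+o(1)}\exp(-C\Phi(R_n)|\log R_n|)$. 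Under $\Phi\ll\Psi$, this quantity diverges fast enough that a Borel--Cantelli/second-moment argument, together with the near-independence of disjoint windows, guarantees a cluster infinitely often. A symmetric argument, producing windows containing at most one gap of length $\ge r_n$, delivers $\underline{\dim}_\Phi E=0$.

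The main obstacle is the part (ii) lower bound on $\overline{\dim}_\Phi$: the rare-cluster event is genuinely rare, its probability has to be pinned down with matching upper and lower bounds to identify the sharp threshold $\Psi$, and disjoint windows are not quite independent because the random placements are a single permutation. I expect to handle the dependence with a second-moment or Janson-type inequality over a well-separated collection of candidate windows, relying critically on level comparability of $a$ to make the marginal probabilities tractable. Calibrating the thinned subsequence $R_n$ and the truncation of $a$ so that the same estimate simultaneously gives summability in case (i) and divergence in case (ii) is the delicate bookkeeping that makes $\Psi$ appear as the precise cutoff.
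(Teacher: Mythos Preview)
Your concentration-plus-Borel--Cantelli plan for the hard directions in part (i) is essentially what the paper does, working through the level structure $s_n$ and a DeMoivre--Laplace binomial tail bound. One ingredient you do not mention: for $\underline{\dim}_\Phi E\ge\underline{\dim}_\Phi C_a$ the paper first proves a separate almost-sure lemma that every level-$n$ interval of $E$ has length at most $Cs_n^{1-\varepsilon_n}$ with $\varepsilon_n=(4\log n)/n$, so that arbitrary balls $B(x,R)$ can be replaced by construction intervals before counting gaps.

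Your argument for the \emph{reverse} inequalities in (i) is a genuine gap. You propose ``localizing inside $C_a$ at a point realizing its $\Phi$-dimension,'' but $C_a$ is not contained in the random set $E$; there is nothing to localize into. In fact the inequalities $\overline{\dim}_\Phi E\ge\overline{\dim}_\Phi C_a$ and $\underline{\dim}_\Phi E\le\underline{\dim}_\Phi C_a$ are not probabilistic at all: they hold for \emph{every} $E\in\mathcal{C}_a$ when $a$ is level comparable, by a deterministic rearrangement result the paper imports from its companion paper on $\Phi$-dimensions (Theorem~\ref{Recall}). You should simply cite this.

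For part (ii) the paper takes a different and simpler route than your second-moment/Janson plan. For $\overline{\dim}_\Phi E=1$ it invokes the Raab--Steger maximum-load theorem: with $\sim 2^{n+\phi(n)}$ balls in $2^n$ bins and $\phi(n)\ll\log n$, the maximum load exceeds $K_n\sim n/\log n$ with probability at least $1/2$. The scale $R_n$ is then taken to be the \emph{length} of that extremal level-$n$ interval, not a fixed $2^{-n}$; this adaptive choice makes the covering estimate immediate. For $\underline{\dim}_\Phi E=0$ the paper uses an empty-bin theorem of Kolchin et al.\ to find, with probability tending to $1$, a level-$n$ interval receiving no gaps from the next $\phi(n)+A\log n$ levels, and then controls deeper levels by the same binomial tail bound. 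In both cases, independence along a sparse subsequence $n_k$ and the second Borel--Cantelli lemma finish the proof with no second-moment or Janson machinery. Your single-window cluster-probability heuristic $\exp(-C\Phi(R_n)|\log R_n|)$ is also not right as stated: forcing covering number $(R_n/r_n)^{1-o(1)}$ in a \emph{fixed} window of size $R_n$ requires a load far above the mean $2^{\phi(n)}$, and the probability of that is governed by the max-load distribution, which is exactly what the paper taps directly via the cited balls-into-bins results.
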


We do not know what happens if $\Phi \sim \Psi $. We note that the values $%
1,0,$ that arise in the small case, are the upper and lower $\Phi $%
-dimensions of the countable decreasing set that belongs to $\mathcal{C}_{a}$%
.

Since the Assouad dimensions are examples of \textquotedblleft
small\textquotedblright\ $\Phi $-dimensions, while the quasi-Assouad
dimensions are examples of \textquotedblleft large\textquotedblright\ $\Phi $%
-dimensions, we immediately deduce:

\begin{corollary}
Let $a$ be a level comparable sequence. Then for a.e. $E\in \mathcal{C}_{a}$
we have 
\begin{equation*}
\dim _{qA}E=\dim _{qA}C_{a},\dim _{qL}E=\dim _{qL}C_{a}
\end{equation*}%
and 
\begin{equation*}
\dim _{A}E=1,\dim _{L}E=0.
\end{equation*}
\end{corollary}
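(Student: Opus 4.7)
My plan is to apply the main Theorem to two extreme choices of the dimension function $\Phi$ and intersect the resulting full-measure events.

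First, for the Assouad dimensions, I would use that $\dim_A$ and $\dim_L$ coincide with the $\Phi$-dimensions for the constant choice $\Phi \equiv 0$. Since $0 < \Psi(x)$ for every $x$ sufficiently close to $0$, this choice trivially falls into the ``small'' regime $\Phi \ll \Psi$ of part (ii), which yields $\dim_A E = 1$ and $\dim_L E = 0$ on a single event of full measure.

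For the quasi-Assouad dimensions, I would use the standard representation
\[
\dim_{qA} E = \lim_{\delta \to 0^+} \overline{\dim}_{\Phi_\delta} E, \qquad \dim_{qL} E = \lim_{\delta \to 0^+} \underline{\dim}_{\Phi_\delta} E,
\]
where $\Phi_\delta$ denotes the constant function $\delta > 0$. Since $\Psi(x) \to 0$ as $x \to 0$, for every fixed $\delta > 0$ one has $\Phi_\delta \gg \Psi$ near $0$, so part (i) of the Theorem applies to each $\Phi_{1/n}$ and produces a full-measure event $A_n$ on which $\overline{\dim}_{\Phi_{1/n}} E = \overline{\dim}_{\Phi_{1/n}} C_a$ and $\underline{\dim}_{\Phi_{1/n}} E = \underline{\dim}_{\Phi_{1/n}} C_a$. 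The countable intersection $\bigcap_n A_n$ still has full measure, and by monotonicity of the maps $\delta \mapsto \overline{\dim}_{\Phi_\delta}$ and $\delta \mapsto \underline{\dim}_{\Phi_\delta}$, the limit along $\delta_n = 1/n$ agrees with the full limit as $\delta \to 0^+$. Hence on $\bigcap_n A_n$ one obtains $\dim_{qA} E = \dim_{qA} C_a$ and $\dim_{qL} E = \dim_{qL} C_a$.

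Intersecting with the event from the Assouad step yields a single full-measure set on which all four equalities hold. No substantive obstacle arises: all the technical content is packaged in the main Theorem, and the only thing to verify here is that the almost-sure statement survives the passage $\delta \to 0^+$, which is a one-line monotonicity argument.
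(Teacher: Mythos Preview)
Your proposal is correct and follows essentially the same route as the paper: the paper also obtains the Assouad statements by specializing to $\Phi\equiv 0$ (the ``small'' regime), and obtains the quasi-Assouad statements by applying the ``large'' regime to each constant $\Phi_\delta=\delta$ and then letting $\delta\to 0$ (see Corollary~\ref{qA} and the analogous corollaries after Theorems~\ref{asuppersmall}, \ref{aslowersmall}, and \ref{aslowerlarge}). Your additional care in passing to a countable family $\delta_n=1/n$ and invoking monotonicity of $\delta\mapsto\overline{\dim}_{\Phi_\delta}$, $\delta\mapsto\underline{\dim}_{\Phi_\delta}$ simply makes explicit a step the paper leaves to the reader.
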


The proofs of these results are very different from both the deterministic
arguments and the earlier random results, and rely heavily upon
probabilistic information about the tails of binomial distributions. A very
loose interpretation of these results is that the quasi-Assouad dimensions
require consideration of deep enough scales for the Central limit theorem to
\textquotedblleft reveal\textquotedblright\ itself so that the almost sure
dimension coincides with the dimension of $C_{a},$ the \textquotedblleft
average\textquotedblright\ set.

\section{Background\label{sec:Notation}}

\subsection{Definition and examples of $\Phi $-dimensions}

Given a metric space $X,$ we denote the ball centred at $x\in X$ with radius 
$R$ by $B(x,R)$. For a bounded set $E\subseteq X$, the notation $N_{r}(E)$
will mean the least number of balls of radius $r$ that cover $E$.

\begin{definition}
By a \textbf{dimension function}, we mean a map $\Phi :(0,1)\mathbb{%
\rightarrow R}^{+}$ with the property that $R^{1+\Phi (R)}$ decreases as $R$
decreases to $0$.
\end{definition}

Of course, $R^{1+\Phi (R)}\leq R$, so $R^{1+\Phi (R)}\rightarrow 0$ as $%
R\rightarrow 0$ for any dimension function $\Phi $. As will be seen,
interesting examples of dimension functions include the constant functions,
as well as $\Phi (x)=1/|\log x|$ and $\Phi (x)=\log |\log x|/|\log x|$.

\begin{definition}
Let $\Phi $ be a dimension function and $X$ a metric space. The \textbf{%
upper }and \textbf{lower }$\Phi $-\textbf{dimensions\ }of $E\subseteq X$ are
given by 
\begin{align*}
\overline{\dim }_{\Phi }E=\inf \Bigl\{\alpha :(\exists c_{1},c_{2}>0)&
(\forall 0<r\leq R^{1+\Phi (R)}\leq R<c_{1})\text{ } \\
& \sup_{x\in E}N_{r}(B(x,R)\tbigcap E)\leq c_{2}\left( \frac{R}{r}\right)
^{\alpha }\Bigr\}
\end{align*}%
and 
\begin{align*}
\underline{\dim }_{\Phi }E=\sup \Bigl\{\alpha :(\exists c_{1},c_{2}>0)&
(\forall 0<r\leq R^{1+\Phi (R)}\leq R<c_{1})\text{ } \\
& \sup_{x\in E}N_{r}(B(x,R)\tbigcap E)\geq c_{2}\left( \frac{R}{r}\right)
^{\alpha }\Bigr\}.
\end{align*}
\end{definition}

The $\Phi $-dimensions were first introduced in \cite{GHMPhi} where their
basic properties were established. Some of these will be highlighted below.

\medskip

Special examples of $\Phi $-dimensions include the following:

(i) The \textbf{upper} \textbf{Assouad }and \textbf{lower Assouad dimensions 
}of $E$, denoted $\dim _{A}E$ and $\dim _{L}E$ respectively. These are the
special cases of the upper and lower $\Phi $-dimensions with $\Phi =0$.

(ii) The (modified) \textbf{upper }and \textbf{lower }$\theta $-\textbf{%
spectrum}, $\overline{\dim }_{A}^{\theta }E$ and $\underline{\dim }%
_{L}^{\theta }E,$ introduced by Fraser in \cite{FYAdv}, arise by taking the
constant function $\Phi =1/\theta -1$. More generally, it is shown in \cite%
{GHMPhi} that if $\Phi (x)\rightarrow 1/\theta -1$ as $x\rightarrow 0$, then
the $\Phi $-dimensions coincide with the $\theta $-spectrum.

(iii) The \textbf{upper} \textbf{quasi-Assouad} and \textbf{lower
quasi-Assouad dimensions}, denoted $\dim _{qA}E$ and $\dim _{qL}E$,\textbf{\ 
}are defined as the limit as $\delta \rightarrow 0$ of the upper and lower $%
\Phi _{\delta }=\delta $ dimensions, respectively. For every set $E$ there
are dimension functions $\Phi _{1},\Phi _{2}$ such that $\overline{\dim }%
_{\Phi _{1}}E=\dim _{qA}E$ and $\underline{\dim }_{\Phi _{2}}E=\dim _{qL}E$,
see \cite[Proposition 2.11]{GHMPhi}. But the choice of dimension functions
depends on the set $E$.

\begin{remark}
Since a set and its closure have the same $\Phi $-dimensions, unless we say
otherwise we will assume all sets are compact. We will also assume the
underlying metric space $X$ is doubling. This ensures, in particular, that $%
\dim _{A}X$ is finite.
\end{remark}

\subsection{Basic properties of $\Phi $-dimensions}

The following relationships between these dimensions are known (see \cite%
{Fal,FTrans,GHMPhi,LX}):%
\begin{equation*}
\dim _{L}E\leq \dim _{qL}E\leq \dim _{H}E\leq \underline{\dim }_{B}E\leq 
\overline{\dim }_{B}E\leq \dim _{qA}E\leq \dim _{A}E
\end{equation*}%
and 
\begin{equation*}
\dim _{L}E\leq \underline{\dim }_{\Phi }E\leq \underline{\dim }_{B}E\leq 
\overline{\dim }_{B}E\leq \overline{\dim }_{\Phi }E\leq \dim _{A}E.
\end{equation*}

\bigskip Here are some other facts which were shown in \cite[Section 2]%
{GHMPhi}.

\begin{proposition}
(i) If $\Phi (x)\rightarrow \infty $ as $x\rightarrow 0,$ then $\overline{%
\dim }_{B}E=\overline{\dim }_{\Phi }E$. If, in addition, $\underline{\dim }%
_{\Phi }E>0$, then $\underline{\dim }_{\Phi }E=\underline{\dim }_{B}E$.
Without the additional assumption, the latter statement need not be true
since any set with an isolated point will have $\underline{\dim }_{\Phi }E=0$%
.

(ii) If $\Phi \leq \Psi ,$ then $\underline{\dim }_{\Phi }E\leq \underline{%
\dim }_{\Psi }E$ and $\overline{\dim }_{\Phi }E\geq \overline{\dim }_{\Psi }$%
. In particular, if $\Phi (R)\rightarrow 0$ as $R\rightarrow 0$, then the $%
\Phi $-dimensions give a range of dimensions between the Assouad and
quasi-Assouad type dimensions: 
\begin{equation*}
\text{ }\dim _{L}E\leq \underline{\dim }_{\Phi }E\leq \dim _{qL}E\leq \dim
_{qA}E\leq \overline{\dim }_{\Phi }E\leq \dim _{A}E\text{.}
\end{equation*}

(iii) If $\Phi (x)\leq c/|\log x|$ for all small $x$, then the $\Phi $%
-dimensions coincide with the Assouad dimensions.
\end{proposition}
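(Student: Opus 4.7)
My plan is to establish both equalities separately. By the monotonicity from part (ii) applied to $\Phi\geq 0$, one gets $\overline{\dim}_{\Phi}E\leq \dim_{A}E$ and $\dim_{L}E\leq \underline{\dim}_{\Phi}E$ for free, so the real content is the reverse inequalities $\dim_{A}E\leq \overline{\dim}_{\Phi}E$ and $\underline{\dim}_{\Phi}E\leq \dim_{L}E$. In both cases the task is to upgrade a $\Phi$-dimension estimate, valid only for pairs with $r\leq R^{1+\Phi(R)}$, to an Assouad-type estimate valid for all $r\leq R$.

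The key observation is that the hypothesis $\Phi(R)\leq c/|\log R|$ forces $R^{1+\Phi(R)}$ to differ from $R$ by only a bounded multiplicative factor: since $\log R<0$ for $R<1$,
\[
R^{\Phi(R)}=e^{-\Phi(R)|\log R|}\geq e^{-c},
\]
so $R^{1+\Phi(R)}\geq e^{-c}R$. Consequently the scale pairs excluded by the $\Phi$-constraint, namely those with $R^{1+\Phi(R)}<r\leq R$, satisfy $R/r<e^{c}$, a uniformly bounded ratio.

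With this in hand, fix $\alpha>\overline{\dim}_{\Phi}E$ with constants $c_1,c_2$ from the definition, and let $0<r\leq R<c_1$. If $r\leq R^{1+\Phi(R)}$ the required bound is immediate. Otherwise $R/r<e^{c}$, and since $X$ is doubling $N_{r}(B(x,R)\cap E)$ is bounded by some constant $K=K(c,\dim_{A}X)$; because $(R/r)^{\alpha}\geq 1$ this is at most $K(R/r)^{\alpha}$, which we absorb into the larger constant $\max(c_2,K)$. The lower-dimension argument is dual: in the excluded region the trivial bound $N_{r}(B(x,R)\cap E)\geq 1\geq e^{-c\alpha}(R/r)^{\alpha}$ suffices. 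The only real obstacle is bookkeeping, namely ensuring the new constants $c_1',c_2'$ work uniformly across both scale regions and that $c_1'$ is taken small enough for the hypothesis $\Phi(R)\leq c/|\log R|$ to actually hold throughout. Intuitively, when $\Phi(R)|\log R|$ stays bounded, $\Phi$ imposes no genuine slowdown between scales $r$ and $R$, so the Assouad and $\Phi$-dimensions must coincide.
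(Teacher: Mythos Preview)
The paper does not prove this proposition itself; it is quoted from \cite[Section~2]{GHMPhi}, so there is no in-paper argument to compare against. Your argument for part (iii) is correct and is the natural one: the hypothesis $\Phi(R)\leq c/|\log R|$ forces $R^{1+\Phi(R)}\geq e^{-c}R$, so the only scale pairs $(r,R)$ excluded by the $\Phi$-constraint satisfy $R/r\leq e^{c}$, and on that bounded-ratio region the doubling assumption (upper case) or the trivial bound $N_{r}\geq 1$ (lower case) supplies the required inequality with an adjusted constant. You take part (ii) as given and do not address part (i); since the paper likewise defers all three parts to \cite{GHMPhi}, this is not a discrepancy.
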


Many examples have been constructed to illustrate strict inequalities
between these dimensions. For instance, although the $\Phi $ and $\Psi $
dimensions coincide for all sets $E$ if $\Phi /\Psi \rightarrow 1$ as $%
x\rightarrow 0$, there will be sets where these dimensions differ if $\Phi $
is bounded above away from $\Psi $. Moreover, given $0<\alpha <\beta <1$,
there is a set $E\subseteq \mathbb{R}$ such that%
\begin{equation*}
\{\overline{\dim }_{\Phi }E:\Phi \rightarrow 0\}=[\alpha ,\beta ]=[\dim
_{qA}E,\dim _{A}E];
\end{equation*}%
\cite[Theorems 3.6, 3.7]{GHMPhi}.

It is easy to see that the $\Phi $-dimensions are bi-Lipschitz invariant and
give detailed geometric information about the structure of the underlying
sets.

The following notation will be convenient for later in the paper.

\begin{notation}
We write $f\sim g,$ and say $f$ is comparable to $g,$ if
there are positive constants $c_{1},c_{2}$ such that $c_{1}f\leq g\leq c_{2}f
$. The symbols $\gtrsim $ and $\lesssim $ are defined similarly. When we
write $f<<g,$ this means $f/g\rightarrow 0$ as either $x\rightarrow 0$ or $%
n\rightarrow \infty ,$ depending on the context.
\end{notation}

\section{Complimentary sets and the Random model}

\label{sec:complementarysets}

\label{subsec:complsets}

\subsection{Complementary sets and the associated Cantor set}

The focus of this paper will be on the relationship between the dimensions
of compact subsets of $\mathbb{R}$ whose complements are open intervals of
the same length. We refer to these as complementary sets or rearrangements
and begin by explaining precisely what we mean by that.

Every closed subset of the interval $[0,1]$ of Lebesgue measure zero is of
the form $E=[0,1]\diagdown \tbigcup U_{j}$ where $\{U_{j}\}$ is a disjoint
family of open subintervals of $[0,1]$ whose lengths sum to one. Let $a_{j}$
be the length of $U_{j}$. There is no loss of generality in assuming $%
a=(a_{j})$ is a decreasing sequence. We denote by $\mathcal{C}_{a}$ the
collection of all such closed sets $E$; the sets in $\mathcal{C}_{a}$ are
called the \textbf{complementary sets of }$a$. Every family, $\mathcal{C}%
_{a},$ contains a countable set, the decreasing rearrangement, $%
D_{a}=\{\sum_{i\geq k}a_{i}\}_{k=1}^{\infty }$.

\medskip

Another complementary set in $\mathcal{C}_{a}$ is the so-called \textbf{%
Cantor set associated with }$a$ and denoted by $C_{a}$. It is constructed as
follows: In the first step, we remove from $[0,1]$ an open interval of
length $a_{1}$, resulting in two closed intervals $I_{1}^{1}$ and $I_{2}^{1}$%
. Having constructed the $k$-th step, we obtain the closed intervals $%
I_{1}^{k},...,I_{2^{k}}^{k}$ contained in $[0,1]$. The intervals $I_{j}^{k},$
$j=1,...,2^{k},$ are called the Cantor intervals of step $k$. The next step
consists in removing from each $I_{j}^{k}$ an open interval of length $%
a_{2^{k}+j-1}$, obtaining the closed intervals $I_{2j-1}^{k+1}$ and $%
I_{2j}^{k+1}$. We define 
\begin{equation*}
C_{a}:=\bigcap_{k\geq 1}\bigcup_{j=1}^{2^{k}}I_{j}^{k}.
\end{equation*}

This construction uniquely determines the set because the lengths of the
removed intervals on each side of a given gap are known. For instance, the
classical middle-third Cantor set is the Cantor set associated with the
sequence $a=\{a_{i}\}$ where $a_{i}=3^{-n}$ if $2^{n-1}\leq i\leq 2^{n}-1$.
This sequence $a$ is \textbf{doubling}, meaning there is a constant $\kappa $
such that $a_{n}\leq \kappa a_{2n}$ for all $n$. Whenever $a$ is doubling,
then $C_{a}$ is bi-Lipschitz equivalent to the central Cantor set $C_{b}$
where $b_{2^{n}}=a_{2^{n}}$ and central means all interval (equivalently,
gaps) on the same level have the same length.

\subsection{Dimensional properties of complementary sets}

All complementary sets have the same box dimensions (see \cite{Fal}),
however, this is not true for the other dimensions. For instance, $\dim
_{H}D_{a}=0,$ but this is not true in general for $C_{a}$. Thus it is of
interest to study the dimensional properties of complementary sets. This
investigation began with Besicovitch and Taylor in \cite{BT} where they
proved that the Cantor set associated with $a$ has the maximal Hausdorff
dimension of all sets in $\mathcal{C}_{a}$. Moreover, they showed that given
any $s\in \lbrack 0,\dim _{H}C_{a}]$, there was some $E\in \mathcal{C}_{a}$
with $\dim _{H}E=s$. The analogous result was subsequently shown in \cite%
{HMZ} for packing dimension.

In \cite{GHM}, this problem was studied for the Assouad dimensions with the
same result again true for the lower Assouad dimension. However, for the
upper Assouad dimension, it was discovered that the associated Cantor set
had the minimal Assouad dimension of all complementary sets and the
decreasing set had the maximal dimension. Under the assumption that $a$ is
doubling, it was shown that set of attainable values for the upper Assouad
dimension was the full interval $[\dim _{A}C_{a},\dim _{A}D_{a}]=[\dim
_{A}C_{a},1]$. Under a slightly stronger assumption, implied by level
comparable (defined below) the set of attainable lower Assouad dimensions
was also shown to be the interval $[0,\dim _{L}C_{a}]$.

\begin{definition}
Given a decreasing sequence $a$, let $s_{n}=2^{-n}\sum_{j\geq 2^{n}}a_{j},$
the average length of the Cantor intervals of $C_{a}$ of step $n$. We will
say the doubling sequence $a$ is \textbf{level comparable} if there are
constants $\tau $ and $\lambda $ with%
\begin{equation}
0<\tau \leq s_{j+1}/s_{j}\leq \lambda <1/2.  \label{doubling}
\end{equation}
\end{definition}

For a central Cantor set, level comparable simply means the ratios of
dissection, $s_{j+1}/s_{j},$ are bounded away from $0$ and $1/2$. We should
point out that the doubling condition already ensures the left hand
inequality holds in (\ref{doubling}). The level comparable condition is very
helpful as it implies that $s_{k}\sim a_{2^{k}}$ because $s_{k}\geq
a_{2^{k+1}}\gtrsim a_{2^{k}}$ and 
\begin{equation}
(1-2\lambda )s_{k}\leq s_{k}-2s_{k+1}\leq a_{2^{k}}.  \label{LevelCompInfo}
\end{equation}

In \cite{GHMPhi}, the $\Phi $-dimensions of rearrangements were
investigated. The results are similar to the Assouad dimensions (although
new proofs were needed in some cases).

\begin{theorem}
\cite[Cor. 4.2, Theorem 4.3, Cor. 4.4]{GHMPhi}\label{Recall} If $\Phi $ is
any dimension function and $a$ a decreasing, summable sequence, then $%
\overline{\dim }_{\Phi }E\leq \overline{\dim }_{\Phi }D_{a}$. If $a$ is
level comparable, then $\overline{\dim }_{\Phi }E\geq \overline{\dim }_{\Phi
}C_{a}$ and \underline{$\dim $}$_{\Phi }C_{a}\geq \underline{\dim }_{\Phi }E$%
. If, in addition, $\Phi \rightarrow p\in \lbrack 0,\infty ],$ then 
\begin{eqnarray*}
\{\underline{\dim }_{\Phi }E &:&E\in \mathcal{C}_{a}\}=[0,\underline{\dim }%
_{\Phi }C_{a}] \\
\{\overline{\dim }_{\Phi }E &:&E\in \mathcal{C}_{a}\}=[\overline{\dim }%
_{\Phi }C_{a},\overline{\dim }_{\Phi }D_{a}].
\end{eqnarray*}
\end{theorem}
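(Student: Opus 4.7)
I would handle the three claims in turn: the upper bound by $\overline{\dim}_\Phi D_a$ which uses only summability of $a$, the two comparisons against $C_a$ which require level comparability, and the attainability of every intermediate value by explicit rearrangements when $\Phi\to p\in[0,\infty]$. For the upper bound, I would first record the elementary covering estimate
\[
N_r(F\cap B(x,R))\leq 1 + R/r + \#\{\text{gaps of }F\text{ in }B(x,R)\text{ of length}>r\},
\]
valid for any closed measure-zero $F\subseteq\mathbb{R}$. The gaps of any $E\in\mathcal{C}_a$ meeting $B(x,R)$ with length in $(r,R]$ have indices in $[M',M]$, where $M'=\min\{j:a_j\leq R\}$ and $M=\max\{j:a_j>r\}$, so their number is at most $M-M'+1$. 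These same indices $a_{M'},\dots,a_M$ sit in a single window of $D_a$ near $0$, of radius $R'=\max(R,\sum_{i\geq M'}a_i)$; since $R\mapsto R^{1+\Phi(R)}$ is monotone in $R$, $R'$ is an admissible scale for the $\Phi$-dimension whenever $R$ is, and the $M-M'+1$ gaps produce that many $r$-separated pieces of $D_a$ in this window, yielding $\overline{\dim}_\Phi E\leq \overline{\dim}_\Phi D_a$.

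For the $C_a$ comparisons, level comparability combined with (\ref{LevelCompInfo}) makes $C_a$ bi-Lipschitz equivalent to a central Cantor set whose ratios of dissection are bounded away from $0$ and $1/2$; within any step-$n$ Cantor interval (length $\sim s_n$), the step-$m$ subintervals (length $\sim s_m$) are evenly distributed, so $N_r(B(x,R)\cap C_a)\sim 2^{m-n}$ whenever $s_n\sim R$ and $s_m\sim r$. For an arbitrary $E\in\mathcal{C}_a$, the gap lengths are unchanged: there are $\sim 2^m-2^n$ gaps of length in $[r,R]$, all placed into $[0,1]$, and a pigeonhole on a partition of $[0,1]$ into $\sim 1/R\sim 2^n$ length-$R$ intervals produces a window containing $\gtrsim 2^{m-n}$ of these gaps (the $\lesssim 2^n$ gaps straddling boundaries are negligible). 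This forces $N_r(B(x,R)\cap E)\gtrsim 2^{m-n}$ and hence $\overline{\dim}_\Phi E\geq \overline{\dim}_\Phi C_a$. The dual inequality $\underline{\dim}_\Phi C_a\geq \underline{\dim}_\Phi E$ follows from the same comparison read with the extremal inequality reversed: the even distribution of gaps in $C_a$ realises the \emph{minimal} covering number over $\mathcal{C}_a$ at the scales detected by $\Phi$, so some ball of $E$ always has covering number no smaller than the corresponding ball in $C_a$.

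For attainability under $\Phi\to p\in[0,\infty]$, I would construct one-parameter families $\{E_t\}_{t\in[0,1]}$ of rearrangements interpolating between $C_a$ and the extremal set ($D_a$ for the upper case, a maximally clustered set with an accumulation point for the lower) by deciding at each construction stage what proportion of the next block of gaps is placed symmetrically versus piled against an endpoint, with the proportion governed by $t$. The convergence $\Phi(R)\to p$ lets only the asymptotic scaling survive, so $t\mapsto\overline{\dim}_\Phi E_t$ and $t\mapsto\underline{\dim}_\Phi E_t$ are continuous on $[0,1]$ and the intermediate value theorem delivers every value in the asserted intervals. The main obstacle throughout is the lower comparison against $C_a$: one must locate, inside an arbitrary $E$, sub-balls at exactly the scales $r\leq R^{1+\Phi(R)}\leq R$ where $C_a$ realises its $\Phi$-dimension, and level comparability is precisely what converts the counting structure of the Cantor construction ($\sim 2^n$ intervals of length $\sim s_n$) into the clean pigeonhole count on which the whole argument hinges.
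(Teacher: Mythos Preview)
This theorem is not proved in the present paper at all: it is quoted verbatim from \cite{GHMPhi} (as Cor.~4.2, Theorem~4.3, Cor.~4.4 there), and the paper offers no argument of its own to compare against. So the only sensible assessment is whether your sketch stands on its own.

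Your outline for $\overline{\dim}_\Phi E\le\overline{\dim}_\Phi D_a$ and for $\overline{\dim}_\Phi E\ge\overline{\dim}_\Phi C_a$ is broadly sound: the covering estimate plus gap counting, and the pigeonhole on $\sim 2^m-2^n$ gaps into $\sim 2^n$ windows, are indeed the mechanisms used in \cite{GHMPhi}. However, two parts of your proposal are not yet proofs. First, your treatment of $\underline{\dim}_\Phi C_a\ge\underline{\dim}_\Phi E$ is not a dual of the upper argument in the way you suggest. To bound $\underline{\dim}_\Phi E$ from above you must exhibit, at the scales where $C_a$ realises its lower dimension, a ball in $E$ whose covering number is \emph{small}; saying that ``some ball of $E$ has covering number no smaller than the corresponding ball in $C_a$'' points the inequality the wrong way. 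The actual argument requires a reverse pigeonhole (some window of $E$ receives at most the average number of large gaps) together with a separate control on the total length of the sub-$r$ gaps landing in that window, and level comparability is used in both steps. Second, for attainability your continuity claim for $t\mapsto\overline{\dim}_\Phi E_t$ and $t\mapsto\underline{\dim}_\Phi E_t$ is asserted but not argued; this is exactly where the hypothesis $\Phi\to p$ does real work (it collapses the $\Phi$-dimension to a limit of ratios $\log 2^{m-n}/\log(s_n/s_{n+m})$ that varies continuously with the interpolation parameter), and without that reduction the intermediate value step is unjustified.
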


One ingredient in the proof was a formula for computing the $\Phi $%
-dimensions of Cantor sets. For this, it is helpful to understand the
comparison $r\leq R^{1+\Phi (R)}$ in terms of the sequence $(s_{n})$.

\begin{notation}
Given a dimension function $\Phi (x)$ and a doubling sequence $a=(a_{j}),$
we define the \textbf{depth function} $\phi $ on $\mathbb{N}$ by the rule
that $\phi (n)$ is the minimal integer $j$ such that $s_{n+j}\leq
s_{n}^{1+\Phi (s_{n})}$. In other words, $\phi (n)$ is the minimal integer
with $s_{n+\phi (n)}/s_{n}\leq s_{n}^{\Phi (s_{n})}$.
\end{notation}

One can easily check that if $\phi (n)\geq 2,$ then $\phi (n)\sim n\Phi
(s_{n})$. Thus if $\phi (n)/n\rightarrow \infty $, then $\overline{\dim }%
_{\Phi }E=\overline{\dim }_{B}E,$ while if $\phi $ is bounded, then the
upper (or lower) $\Phi $-dimension coincides with the upper (resp., lower)
Assouad dimension.

\begin{theorem}
\label{Cantorformula}\cite[Theorem 3.3]{GHMPhi} Let $a$ be a doubling
sequence and $C_{a}$ the associated Cantor set. The upper and lower $\Phi $%
-dimensions of $C_{a}$ are given by 
\begin{equation}
\overline{\dim }_{\Phi }C_{a}=\inf \Bigl\{\beta :(\exists \ k_{0},c_{0}>0)\
(\forall k\geq k_{0}\text{, }n\geq \phi (k))\text{ }\left( \frac{s_{k}}{%
s_{k+n}}\right) ^{\beta }\geq c_{0}2^{n}\Bigr\}  \label{Thetadim}
\end{equation}%
and%
\begin{equation}
\underline{\dim }_{\Phi }C_{a}=\sup \Bigl\{\beta :(\exists \ k_{0},c_{0}>0)\
(\forall k\geq k_{0}\text{, }n\geq \phi (k))\text{ }\left( \frac{s_{k}}{%
s_{k+n}}\right) ^{\beta }\leq c_{0}2^{n}\Bigr\}.  \label{LowerTheta}
\end{equation}
\end{theorem}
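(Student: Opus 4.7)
The strategy is to discretize. I would restrict attention to scales $R=s_{k}$, $r=s_{k+n}$ from the Cantor construction and translate the constraint $r\le R^{1+\Phi(R)}$ into $n\ge \phi(k)$. Once this is in place, both formulas essentially read off the definitions of the $\Phi$-dimensions.

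The heart of the argument is a uniform counting estimate: for every $x\in C_{a}$, every sufficiently large $k$, and every $n\ge 0$,
\[
N_{s_{k+n}}\bigl(B(x,s_{k})\cap C_{a}\bigr)\ \sim\ 2^{n},
\]
with comparability constants depending only on the doubling constant of $a$. For the upper bound, doubling gives $s_{k}\sim a_{2^{k}}$, so each step-$k$ Cantor interval has length comparable to $s_{k}$, the ball $B(x,s_{k})$ meets only boundedly many of them, and each such interval is the disjoint union of exactly $2^{n}$ step-$(k+n)$ intervals, each of diameter $\lesssim s_{k+n}$ and hence coverable by $O(1)$ balls of radius $s_{k+n}$. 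For the lower bound, $B(x,s_{k})$ contains at least one full step-$k$ interval whose $2^{n}$ step-$(k+n)$ descendants are pairwise separated by gaps of length $\gtrsim s_{k+n}$ (using that the gap removed at level $k+m$ is comparable to $s_{k+m}\gtrsim s_{k+n}$ for $m\le n$), so no ball of radius $s_{k+n}$ meets two of them.

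With the counting estimate in hand I would perform the scale reduction. For any $0<r\le R<c_{1}$ pick $k,n\ge 0$ with $s_{k+1}<R\le s_{k}$ and $s_{k+n+1}<r\le s_{k+n}$; doubling then gives $R/r\sim s_{k}/s_{k+n}$ and $N_{r}(B(x,R)\cap C_{a})\sim 2^{n}$. The monotonicity of $R\mapsto R^{1+\Phi(R)}$ built into the definition of a dimension function allows us to translate $r\le R^{1+\Phi(R)}$ into $s_{k+n}\lesssim s_{k}^{1+\Phi(s_{k})}$, i.e.\ $n\ge \phi(k)$, up to an additive constant absorbed into $k_{0}$. Thus the inequality $N_{r}(B(x,R)\cap C_{a})\le c_{2}(R/r)^{\alpha}$ over admissible $(r,R)$ is equivalent, after adjusting constants, to $2^{n}\le c_{0}(s_{k}/s_{k+n})^{\alpha}$ for all $k\ge k_{0}$ and $n\ge \phi(k)$. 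Taking the infimum over $\alpha$ yields \eqref{Thetadim}; reversing inequalities throughout yields \eqref{LowerTheta}.

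The main obstacle is the uniform counting estimate, particularly its lower bound: using only the doubling assumption, one must show that the $2^{n}$ step-$(k+n)$ intervals inside a step-$k$ interval really are $s_{k+n}$-separated uniformly in $k,n$, and that $B(x,s_k)$ reliably contains a full step-$k$ interval for $x \in C_a$ and $k$ large. Everything else is routine bookkeeping translating between continuous scales and the discrete Cantor scales, using the monotonicity built into the definition of a dimension function.
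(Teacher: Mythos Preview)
The paper does not prove this theorem; it is quoted from \cite{GHMPhi} (their Theorem 3.3), so there is no in-paper argument to compare against. Your overall strategy---establish $N_{s_{k+n}}(B(x,s_k)\cap C_a)\sim 2^n$ uniformly and then discretize scales via the depth function---is the natural one and would go through, but two of the specific claims you make are false under the doubling hypothesis alone and only hold under the stronger level-comparable assumption.

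First, you assert that doubling gives $s_k\sim a_{2^k}$. This is not true: for $a_j\sim 1/(j\log^2 j)$ one has $a_n/a_{2n}\to 2$ (so $a$ is doubling) but $s_k\sim 2^{-k}/k$ while $a_{2^k}\sim 2^{-k}/k^2$, so $s_k/a_{2^k}\to\infty$. What \emph{is} true under doubling, and what you actually need, is that every step-$k$ Cantor interval has length comparable to $s_k$. This follows because doubling forces all $a_j$ with $2^{k+m}\le j<2^{k+m+1}$ to be within a fixed factor of one another, so every step-$k$ interval has length $\sim\sum_{m\ge 0}2^m a_{2^{k+m}}$; since there are $2^k$ such intervals with total length $2^k s_k$, each has length $\sim s_k$.

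Second, your lower-bound argument claims the $2^n$ step-$(k+n)$ descendants are separated by gaps $\gtrsim s_{k+n}$, via ``the gap removed at level $k+m$ is comparable to $s_{k+m}$''. The same counterexample defeats this: that gap is some $a_j$ with $j\in[2^{k+m-1},2^{k+m})$, hence $\sim a_{2^{k+m}}$, which can be $o(s_{k+m})$. The fix is to argue with lengths rather than gaps: since each step-$(k+n)$ interval has length $\ge c\,s_{k+n}$, a ball of radius $s_{k+n}$ can fully contain at most $2/c$ of them, hence can meet at most $O(1)$ of them; this yields $N_{s_{k+n}}\gtrsim 2^n$ directly. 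With these two repairs your plan is sound.
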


\subsection{Random Model for Complementary sets}

\label{Sec:asresults}

The goal of this paper is to study the almost sure dimensional properties of
random rearrangements. We now describe the model that we use to generate a
random ordering of $\mathbb{N}$ and thereby a random set belonging to the
sequence $\{a_{n}\}$. Our approach is formally different from that of Hawkes
in \cite{Haw}, but the resulting random ordering is the same, as we explain
below.

The random order has two salient and defining features: 1) when it is
restricted to any finite subset of $\mathbb{N}$ each possible ordering is
equally likely, and 2) for any two disjoint $A,B\subset \mathbb{N}$, the
random order restricted to $A$ is independent of the one restricted to $B$.

Our construction is inductive. We start the induction with the trivial order
on the set $\{ 1 \}$. Having constructed a random order on $\{1,2,\ldots,
2^n-1\}$, the induction step consists of two parts which are done
independently:

\begin{enumerate}
\item Choose a (uniformly) random permutation of $\{2^n, 2^n+1, \ldots,
2^{n+1}-1\}$, and

\item Randomly choose, independently and with replacement, a set of $2^{n}$
\textquotedblleft locations\textquotedblright\ in which to insert the
elements of $\{2^{n},2^{n}+1,\ldots ,2^{n+1}-1\}$.
\end{enumerate}

The idea is that extending a permutation of $\{1,2,\ldots ,2^{n}-1\}$ to a
permutation of $\{1,2,\ldots ,2^{n}-1,\ldots ,2^{n+1}-1\}$ involves ordering 
$\{2^{n},2^{n}+1,\ldots ,2^{n+1}-1\}$ and then inserting these elements into
the already existing permutation (including left of the left-most or right
of the right-most). Each of these \textquotedblleft
places\textquotedblright\ could contain zero or more \textquotedblleft
new\textquotedblright\ elements. This means that 2) above is equivalent to
generating a sample from the multinomial distribution of $2^{n}$ trials with 
$2^{n}$ outcomes (the \textquotedblleft locations\textquotedblright ) which
are all equally likely.

Let $\mathcal{O}$ be the set of all total orders on $\mathbb{N}$ and for a
finite subset $F\subset \mathbb{N}$ and a total order $\triangleleft $ on $F$%
, let $\mathcal{O}_{\triangleleft }=\{\prec \in \mathcal{O}\,:\,\prec
|_{F}=\triangleleft \}$ (these are the analogues in this situation of the
\textquotedblleft cylinder sets\textquotedblright\ from a countable
product). The $\sigma$-algebra we use is generated by the sets $\mathcal{O}%
_{\triangleleft }$ taken over all finite sets $F$ and over all total orders $%
\triangleleft $ on $F$. Our probability measure on $\mathcal{O}$ is
generated by the property that $\text{Prob}(\mathcal{O}_{\triangleleft
})=(|F|!)^{-1}$.

Given a total order $\triangleleft $ $\in \mathcal{O}$ and $a=\{a_{n}\},$ we
define, for each $i\in \mathbb{N}$, a random open interval of length $a_{i}$
by 
\begin{equation*}
J_{i}(\triangleleft ):=(\sum_{j\triangleleft
i}a_{j},a_{i}+\sum_{j\triangleleft i}a_{j}).
\end{equation*}%
We define the random set $K_{\triangleleft }\in \mathcal{C}_{a}$ by 
\begin{equation}
K_{\triangleleft }:=[0,\sum_{i}a_{i}]\setminus \{\tbigcup
J_{i}(\triangleleft )\}.  \label{eq:Komega}
\end{equation}%
Notice in particular that if $i\triangleleft j$ then $J_{i}(\triangleleft )$%
, the gap corresponding to $a_{i}$, is to the left of $J_{j}(\triangleleft )$%
, the gap corresponding to $a_{j}$. The subintervals of $[0,1]$ that are
bounded by the gaps of lengths $a_{1},...,a_{2^{n}-1}$ (or the unbounded
gaps) will be called the intervals of level $n$ for the set $%
K_{\triangleleft }$. We remark that almost surely a set in $\mathcal{C}_{a}$
has no isolated points and hence there are $2^{n}$ closed intervals at step $%
n$ in this construction.

The model from \cite{Haw} generates a random order on $\mathbb{N}$ by
choosing an iid sequence, $\omega _{n}$, of $U[0,1]$ random variables and
defining $i\prec _{\omega }j$ if and only if $\omega _{i}\leq \omega _{j}$.
Hawkes' random set $K_{\omega }$ is our set $K_{\prec _{\omega }}$.
Obviously, $K_{\omega }=K_{\omega ^{\prime }}$ if and only if the
corresponding total orders, $\prec _{\omega }$ and $\prec _{\omega
^{\prime}},$ agree. When restricted to a finite subset $F\subset \mathbb{N}$
each possible order is equally likely, so if we are given $\omega \in
\lbrack 0,1]^{F}$ and let $\mathcal{O}_{\omega }=\{\omega ^{\prime }:\prec
_{\omega ^{\prime }}|_{F}=\prec _{\omega }\}$, then Prob$(\mathcal{O}%
_{\omega })=(|F|!)^{-1}$. The $\sigma $-algebra on $\mathcal{O}$ is also
generated by the various $\mathcal{O}_{\omega }$, $\omega \in \lbrack
0,1]^{F},$ taken over all finite sets $F$ since the product $\sigma $%
-algebra on $[0,1]^{\infty }$ is generated by the cylinder sets. Thus the
two random models are effectively the same.

\medskip

Our proofs will rely heavily upon the following variation on the
DeMoivre-Laplace theorem \cite[p.13, Theorem 7]{Bo}.

\begin{theorem}
If $Y$ is a binomially distributed random variable with distribution $%
B(M,2^{-N})$ and $\eta 2^{-N}(1-2^{-N})M\geq 12$ for some $\eta <1/12\mathtt{%
,}$ then%
\begin{equation*}
\mathcal{P(}\left\vert Y-M2^{-N}\right\vert \geq \eta M2^{-N})\leq \exp
\left( -\eta ^{2}M2^{-N}/3\right) /(\eta \sqrt{M2^{-N}}).
\end{equation*}
\end{theorem}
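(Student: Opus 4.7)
My plan is to derive this bound from the classical local DeMoivre--Laplace theorem combined with the Mills ratio for the Gaussian tail. Set $p=2^{-N}$, $q=1-p$, $\mu = Mp$ and $\sigma^2 = Mpq$; the hypothesis $\eta\sigma^2 \geq 12$ with $\eta < 1/12$ guarantees $\sigma > 12$, placing us well inside the regime where $B(M,p)$ is accurately approximated by the Gaussian of mean $\mu$ and variance $\sigma^2$.

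The first step is to invoke the local DeMoivre--Laplace formula,
\begin{equation*}
\mathcal{P}(Y=k) = \frac{1}{\sigma\sqrt{2\pi}} \exp\!\left(-\frac{(k-\mu)^2}{2\sigma^2}\right)\bigl(1+O(\sigma^{-1})\bigr),
\end{equation*}
uniformly for $k$ in a moderate window $|k-\mu| = O(\sigma\sqrt{\log\sigma})$, and to sum this over $k \geq \mu(1+\eta)$. Comparing the resulting Riemann sum to the corresponding Gaussian integral reduces the upper-tail estimate to $\mathcal{P}(Z \geq \eta\mu/\sigma)$ for a standard normal $Z$. Applying the Mills ratio inequality $\int_x^\infty e^{-u^2/2}\,du \leq e^{-x^2/2}/x$ at $x = \eta\mu/\sigma = \eta\sqrt{Mp/q}$ then yields
\begin{equation*}
\mathcal{P}(Y-\mu \geq \eta\mu) \;\leq\; \frac{C}{\eta\sqrt{2\pi Mp/q}}\exp\!\left(-\frac{\eta^2 Mp}{2q}\right),
\end{equation*}
where $C$ absorbs the local-CLT error. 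Since $q \leq 1$ we have $\sqrt{Mp/q} \geq \sqrt{Mp}$, and $\eta^2 Mp/(2q) \geq \eta^2 Mp/3$, so this is bounded by $(C/\sqrt{2\pi})\exp(-\eta^2 Mp/3)/(\eta\sqrt{Mp})$. The lower tail is handled symmetrically; combining the two and using the slack between $1/(2q)$ and $1/3$ in the exponent (together with the quantitative hypothesis $\eta\sigma^2 \geq 12$) absorbs the remaining constant and the factor of $2$, producing the stated inequality.

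The main technical obstacle is making the local DeMoivre--Laplace approximation uniformly effective across the relevant range of $k$. Inside the moderate window above, the standard local CLT with its $O(\sigma^{-1})$ error is enough. For the far tail $k - \mu \gtrsim \sigma\sqrt{\log\sigma}$, the Gaussian approximation deteriorates, but the crude Bennett--Chernoff estimate $\mathcal{P}(Y-\mu \geq t) \leq \exp(-t^2/(2\sigma^2+2t/3))$ is already super-polynomially smaller than the Gaussian main term there and is readily absorbed. Pinning down the exact constants $12$, $1/12$ and $1/3$ appearing in the statement is then a matter of careful bookkeeping through these two regimes, as is carried out in Borovkov's monograph \cite{Bo}.
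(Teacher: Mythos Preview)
The paper does not actually prove this theorem: it is stated as a direct quotation of \cite[p.~13, Theorem~7]{Bo} and is used as a black box (the only argument the paper supplies is the two-line derivation of Corollary~\ref{prob} from it). So there is no in-paper proof to compare your proposal against.

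Your sketch is a faithful outline of the standard DeMoivre--Laplace route to such an inequality --- local CLT in a moderate window, Mills ratio for the Gaussian tail, and a Chernoff-type bound to mop up the far tail --- and this is indeed the method behind the result as presented in Bollob\'as's \emph{Random Graphs}. One small correction: in this paper \cite{Bo} is Bollob\'as, not Borovkov. Otherwise your plan is sound; the only thing not actually done is the ``careful bookkeeping'' you allude to for the explicit constants $12$, $1/12$, $1/3$, but since the paper itself simply imports the result, deferring that bookkeeping to the cited source is entirely in keeping with what the authors do.
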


Specifically, we will use the following corollary.

\begin{corollary}
\label{prob}There is a constant $c>0$ such that if $Y$ is a binomially
distributed random variable with distribution $B(M,2^{-N})$ and $M2^{-N}\geq
200$, then 
\begin{equation*}
\mathcal{P(}Y<<M2^{-N})\text{, }\mathcal{P(}Y>>M2^{-N})\leq \exp \left(
-cM2^{-N}\right) .
\end{equation*}
\end{corollary}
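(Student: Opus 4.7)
The plan is to deduce the corollary directly from the preceding DeMoivre-Laplace estimate by fixing a small $\eta$ and exploiting that one-sided deviation events are contained in two-sided ones. I would interpret $\{Y<<M2^{-N}\}$ and $\{Y>>M2^{-N}\}$ as ``$Y$ is below (respectively, above) its mean $M2^{-N}$ by at least a fixed multiplicative factor'', i.e.\ as $\{Y\leq (1-\eta_0)M2^{-N}\}$ and $\{Y\geq (1+\eta_0)M2^{-N}\}$ for some fixed $\eta_0\in(0,1)$. Both events are subsets of the symmetric event $\{|Y-M2^{-N}|\geq \eta M2^{-N}\}$ for any $\eta\leq \eta_0$, and it is this two-sided event that the theorem controls.

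Next I would verify the hypothesis of the theorem. The case $N=0$ is trivial since then $Y\equiv M$, so we may assume $N\geq 1$; then $1-2^{-N}\geq 1/2$, and the hypothesis $M2^{-N}\geq 200$ gives $M2^{-N}(1-2^{-N})\geq 100$. Choosing $\eta\in(0,1/12)$ small enough so that also $\eta\cdot M2^{-N}(1-2^{-N})\geq 12$ holds (enlarging the threshold $200$ slightly if the constants do not mesh exactly) makes the theorem applicable, yielding
\begin{equation*}
\mathcal{P}(|Y-M2^{-N}|\geq \eta M2^{-N}) \leq \frac{\exp(-\eta^2 M2^{-N}/3)}{\eta\sqrt{M2^{-N}}}.
\end{equation*}

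Finally I would absorb the polynomial prefactor into the exponent. Since $M2^{-N}\geq 200$ and $\eta$ is a fixed positive constant, the factor $1/(\eta\sqrt{M2^{-N}})$ is bounded by a universal constant (indeed, by $1$ once $M2^{-N}\geq \eta^{-2}$), so the right-hand side is at most $\exp(-cM2^{-N})$ for any $c<\eta^2/3$. The only real obstacle is pure bookkeeping -- reconciling the three numerical constants $200$, $12$ and $1/12$ simultaneously -- which is non-conceptual and is handled by slightly shrinking $\eta$ (or slightly enlarging the threshold), at the cost of a correspondingly smaller final $c$. No probabilistic input beyond the stated theorem is required.
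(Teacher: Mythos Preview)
Your proposal is correct and takes essentially the same approach as the paper, which simply records the containments $\{Y\gg M2^{-N}\}\subseteq\{Y\geq (13/12)M2^{-N}\}$ and $\{Y\ll M2^{-N}\}\subseteq\{Y\leq (11/12)M2^{-N}\}$ and invokes the theorem. One small slip: the phrase ``$\eta$ small enough'' should read ``$\eta$ close enough to $1/12$'' (shrinking $\eta$ makes the hypothesis $\eta\, M2^{-N}(1-2^{-N})\geq 12$ harder, not easier), but your parenthetical about enlarging the threshold is the correct fix.
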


\begin{proof}
These follow immediately from the theorem upon noting that the set $\left\{
Y>>M2^{-N}\right\} $ is contained in $\left\{ Y\geq (13/12)M2^{-N}\right\} $
and the set $\left\{ Y<<M2^{-N}\right\} $ is contained in $\left\{ Y\leq
(11/12)M2^{-N}\right\} .$
\end{proof}

\section{Almost Sure Upper Dimensions for Complementary sets}

\label{sec:asupperdim}

\textbf{Terminology}: To study the dimensional properties of random
rearrangments, it will helpful to introduce the following iterative
construction of $E\in \mathcal{C}_{a}$ that we will refer to as the \textbf{%
standard construction}. We begin with the interval $[0,1]$ and then remove
the open interval (gap) $G_{1}\subseteq E^{c},$ of length $a_{1}$, leaving $%
E_{1}=[0,1]\diagdown G_{1}$, a union of at most two closed intervals called
the intervals of step one. At step (or level) 2 we remove the two gaps of
level two, $G_{2},G_{3},$ of lengths $a_{2}$ and $a_{3}$, leaving $%
E_{2}=E_{1}\diagdown (G_{2}\cup G_{3})$, a union of at most 4 closed
intervals. Now repeat this process. Given $E_{n-1}$, to form $E_{n}$ we
remove from $E_{n-1}$ the $2^{n-1}$ gaps of level $n$ of lengths $%
a_{2^{n-1}},...,a_{2^{n}-1}$, leaving $E_{n}$, a union of at most $2^{n}$
closed intervals, the intervals of step $n$. The set $E$ equals $\tbigcap
E_{n}$. Each set $E_{n}$ is the union of the finitely many closed intervals
and isolated points that lie between the gaps that have been removed at
levels $1,...,n$.

\subsection{Almost sure upper dimensions for \textquotedblleft
large\textquotedblright\ $\Phi $}

\label{subsec:upperlarge}

\begin{theorem}
\label{TheoremA} \label{asupperlarge}Let $a=\{a_{n}\}$ be a level comparable
sequence. \ For almost every $E\in \mathcal{C}_{a}$ we have $\overline{\dim }%
_{\Phi }E=\overline{\dim }_{\Phi }C_{a}$ if 
\begin{equation*}
\Phi (x)>>\frac{\log \left\vert \log (x)\right\vert }{|\log x|}\text{ for }x%
\text{ near }0,
\end{equation*}%
equivalently, $\phi (n)>>\log n$.
\end{theorem}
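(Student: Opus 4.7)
Since Theorem \ref{Recall} already gives $\overline{\dim}_\Phi E \geq \overline{\dim}_\Phi C_a$ for every $E \in \mathcal{C}_a$, the task is the matching upper bound almost surely. The plan is to fix $\beta > \overline{\dim}_\Phi C_a$ and a slightly larger $\beta'$; Theorem \ref{Cantorformula} then supplies $k_0, c_0 > 0$ with $(s_k/s_{k+n})^\beta \geq c_0 \cdot 2^n$ for all $k \geq k_0$, $n \geq \phi(k)$. Using level comparability and the definition of $\phi$, the $\Phi$-dimension estimate reduces to showing, almost surely and for all sufficiently large $k$, all $n \geq \phi(k)$, and all $x \in E$, the control
\[
N_{s_{k+n}}\bigl(E \cap B(x, s_k)\bigr) \;\lesssim\; (k + n) \cdot 2^n.
\]
Combining this with $(R/r)^{\beta'-\beta} \geq (s_k/s_{k+n})^{\beta'-\beta} \geq 2^{n(\beta'-\beta)}$ and the hypothesis $\phi(k) \gg \log k$ (which forces $k + n \leq 2^{n(\beta'-\beta)}$ for $n \geq \phi(k)$ and $k$ large) yields $N_{s_{k+n}}(E \cap B(x, s_k)) \leq C(R/r)^{\beta'}$, whence $\overline{\dim}_\Phi E \leq \beta'$; sending $\beta' \searrow \overline{\dim}_\Phi C_a$ completes the proof.

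For a fixed level-$k$ interval $I$ of the random set $E$, let $M_n(I)$ denote the number of level-$(k+n)$ sub-intervals of $I$. The random model gives the recursion $M_{j+1}(I) = M_j(I) + B_{j+1}$, where, conditional on the past, $B_{j+1} \sim \mathrm{Bin}(2^{k+j}, M_j(I)/2^{k+j})$ because each new level-$(k+j+1)$ gap lands uniformly among the $2^{k+j}$ level-$(k+j)$ intervals of $[0,1]$; in particular $\mathbb{E} M_n(I) = 2^n$. Iterating the bound $\mathbb{E}[e^{\lambda B_{j+1}} \,|\, \mathrm{past}] \leq \exp(M_j(I)(e^\lambda - 1))$ through all $n$ levels (at each step the effective Chernoff exponent roughly doubles) and optimising at $\lambda \sim 2^{-n}$ produces constants $C_1, c_1 > 0$ such that
\[
\mathcal{P}\bigl(M_n(I) \geq w \cdot 2^n\bigr) \leq C_1 e^{-c_1 w} \quad \text{for every } w \geq 1.
\]

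Apply this with $w = A(k + n)$, where $A$ is chosen so that $c_1 A > 2 \log 2$, and union bound over the $2^k$ level-$k$ intervals:
\[
\mathcal{P}\bigl(\max_I M_n(I) \geq A(k+n) \cdot 2^n\bigr) \leq C_1 \cdot 2^k e^{-c_1 A(k+n)} \leq C_1 \cdot 2^{-k} e^{-c_1 A n}.
\]
The double sum $\sum_{k \geq k_0} \sum_{n \geq \phi(k)}$ of these probabilities is finite, so Borel-Cantelli forces $\max_I M_n(I) \leq A(k+n) \cdot 2^n$ almost surely for all large $k$ and all $n \geq \phi(k)$. Finally, every gap of level $\leq k$ has length $\geq a_{2^k - 1} \gtrsim s_k$ by the doubling and level comparable hypotheses, so a ball $B(x, s_k)$ centred at $x \in E$ meets only $O(1)$ level-$k$ intervals, and $N_{s_{k+n}}(E \cap B(x, s_k)) \lesssim (k + n) \cdot 2^n$ follows, modulo a parallel binomial-tail estimate for the diameters of level-$(k+n)$ intervals.

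The chief obstacle is producing the exponential tail for $M_n(I)$. Because the martingale $M_n(I)/2^n$ has variance of the same order as its mean, one cannot obtain concentration exponentially small in $2^n$ --- only exponentially small in the deviation parameter $w$ --- so a direct union bound over the $2^k$ level-$k$ intervals cannot succeed without including the logarithmic safety factor $(k + n)$. It is precisely the critical threshold $\phi(k) \gg \log k$ that lets this factor be absorbed by an arbitrarily small enlargement $\beta' - \beta$, which is why the borderline function $\log|\log x|/|\log x|$ appears in the hypothesis. A secondary care point is controlling the maximum diameter of a level-$(k+n)$ interval so as to convert the geometric count of intervals inside $B(x, s_k)$ into the covering number $N_{s_{k+n}}$; this requires an analogous, easier binomial-tail argument.
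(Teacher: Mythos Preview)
Your branching-process bound on the sub-interval count $M_n(I)$ is correct: iterating $\mathbb{E}[e^{\lambda M_{j+1}}\mid M_j]\le\exp\bigl(M_j(\lambda+e^{\lambda}-1)\bigr)$ with $\lambda\sim 2^{-n}$ does yield $\mathcal{P}(M_n(I)\ge w\cdot 2^n)\le C_1 e^{-c_1 w}$ uniformly in $n$, and the union bound over the $2^k$ level-$k$ intervals together with Borel--Cantelli goes through. The absorption of the factor $(k+n)$ into $(s_k/s_{k+n})^{\beta'-\beta}$ is exactly where the hypothesis $\phi(k)\gg\log k$ enters, and you identify this correctly.

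The gap is the passage from $M_n(I)$ to $N_{s_{k+n}}(I\cap E)$. A level-$(k+n)$ sub-interval $J$ can have length far exceeding $s_{k+n}$ (its length is determined by the \emph{deeper} gaps it will receive, not by how many siblings it has), so the covering number acquires an extra factor $\max_J |J|/s_{k+n}$. You label this a ``secondary care point'' needing a ``parallel, easier'' estimate, but it is neither: bounding the maximum length of a level-$m$ interval requires controlling, simultaneously for all $j>m$, the number of level-$j$ gaps landing there, weighted by $s_j$. That is itself a multi-level binomial-tail argument, essentially the Lemma preceding Theorem~\ref{aslowerlarge} in the paper (which gives $|J|\lesssim s_m^{1-4\log m/m}$ a.s.). With that Lemma your route can be completed, since the extra factor is then polynomial in $k+n$ and absorbable by the same mechanism; but as written the step is missing, and it is comparable in substance to what you call the main argument.

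The paper avoids this altogether. It uses the rearrangement invariance $N_r(I)\sim N_r(I')$ from \cite[Lemma 4.1]{GHMPhi} to assume the gaps inside a fixed level-$n$ interval $I_n$ are in decreasing order, and then bounds $N_{s_{n+m}}(I_n\cap E)$ directly by
\[
\Lambda_0(I_n)+\sum_{k=1}^{L-1}\Lambda_k(I_n)\,\frac{s_{n+m+k}}{s_{n+m}}+1,
\]
where $\Lambda_0$ counts gaps of levels $n+1,\ldots,n+m$ and $\Lambda_k$ counts gaps of level $n+m+k$. Each $\Lambda_k$ is a single binomial variable (no branching recursion), and a pigeonhole over $k$ plus Corollary~\ref{prob} gives the tail. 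So the paper decomposes by gap level, packaging the combinatorial count and the length contribution into one weighted sum; your approach separates these into two pieces, each of which turns out to need its own multi-level estimate.
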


\begin{corollary}
\label{qA}For almost all rearrangements $E\in \mathcal{C}_{a}$, $\dim
_{qA}E=\dim _{qA}C_{a}$.
\end{corollary}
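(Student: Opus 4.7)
The plan is to deduce Corollary \ref{qA} directly from Theorem \ref{TheoremA} by realizing $\dim_{qA}$ as the limit of a sequence of upper $\Phi$-dimensions for admissible dimension functions. By the definition given in Section \ref{sec:Notation}, $\dim_{qA}E = \lim_{\delta\downarrow 0}\overline{\dim}_{\Phi_{\delta}}E$ where $\Phi_{\delta}\equiv\delta$ is the constant function. Since the constant dimension functions are pointwise ordered, Proposition 2.3(ii) implies that $\delta\mapsto\overline{\dim}_{\Phi_{\delta}}E$ is non-increasing in $\delta$, so the limit along any sequence $\delta_{j}\downarrow 0$ coincides with the full limit $\dim_{qA}E$.

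Next I verify the hypothesis of Theorem \ref{TheoremA} for each $\Phi_{\delta}$. Since $\Psi(x)=\log|\log x|/|\log x|\to 0$ as $x\to 0$, any fixed positive constant $\delta$ satisfies $\Phi_{\delta}(x)=\delta\gg\Psi(x)$ trivially; equivalently, using level comparability to give geometric decay of $s_{n}$, the remark following the definition of the depth function yields $\phi(n)\sim n\Phi_{\delta}(s_{n})=n\delta\gg\log n$. Theorem \ref{TheoremA} therefore provides, for each $\delta>0$, a full-measure event $\Omega_{\delta}\subseteq\mathcal{C}_{a}$ on which $\overline{\dim}_{\Phi_{\delta}}E=\overline{\dim}_{\Phi_{\delta}}C_{a}$.

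To finish, I pick any sequence $\delta_{j}\downarrow 0$ and set $\Omega:=\bigcap_{j}\Omega_{\delta_{j}}$; this countable intersection of full-measure events is itself full measure. On $\Omega$ the equality $\overline{\dim}_{\Phi_{\delta_{j}}}E=\overline{\dim}_{\Phi_{\delta_{j}}}C_{a}$ holds simultaneously for every $j$, and passing $j\to\infty$ on both sides (using the monotonicity from the first paragraph to identify each limit with its quasi-Assouad dimension) yields $\dim_{qA}E=\dim_{qA}C_{a}$ almost surely. The only genuine subtlety is this interchange of an almost-sure statement with the limit $\delta\downarrow 0$: a priori the null exceptional set supplied by Theorem \ref{TheoremA} could depend on $\delta$, so the argument must be routed through a countable sequence of $\delta$-values rather than applied to a single dimension function. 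The probabilistic heart of the argument---binomial tail bounds from Corollary \ref{prob} combined with a Borel--Cantelli summation---is already packaged inside Theorem \ref{TheoremA} and need not be reopened here.
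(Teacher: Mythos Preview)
Your proof is correct and follows essentially the same approach as the paper's: apply Theorem \ref{TheoremA} to each constant $\Phi_{\delta}=\delta$ and let $\delta\to 0$. You have simply made explicit the measure-theoretic point (passing through a countable sequence $\delta_j\downarrow 0$ and intersecting the resulting full-measure events) that the paper leaves implicit in the phrase ``letting $\delta\to 0$.''
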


\begin{proof}
For each $\delta >0$ and $\Phi (x)=\delta ,$ we have $\overline{\dim }_{\Phi
}E=\overline{\dim }_{\Phi }C_{a}$ a.s. Letting $\delta \rightarrow 0$ gives
the result for the quasi-Assouad dimension.
\end{proof}

\begin{proof}[Proof of Theorem \ref{TheoremA}] As we noted in Theorem \ref{Recall},
in \cite{GHMPhi} it was shown that $\overline{\dim }_{\Phi }E\geq \overline{%
\dim }_{\Phi }C_{a}$ for all $E\in \mathcal{C}_{a}$. Thus it suffices to
show the other inequality holds almost surely. Fix $d>\overline{\dim }_{\Phi
}C_{a}$ and we will show $\overline{\dim }_{\Phi }E\leq d$ a.s.

Since the sequence $a$ is level comparable, there are constants $\tau
,\lambda $ such that $0<\tau \leq s_{j+1}/s_{j}\leq \lambda <1/2$. Consider $%
N_{r}(B(x,R)\tbigcap E)$ for 
\begin{equation*}
(1-2\lambda )s_{n+1}\leq R\leq (1-2\lambda )s_{n},\text{ }r\leq R^{1+\Phi
(R)}
\end{equation*}%
and $x\in E$. As $R\leq s_{n}$, we have $s_{n+m+1}\leq r<s_{n+m}$ for some $%
m\geq \phi (n)$.

Since $x\in E$, $x$ does not belong to any of the removed gaps and so $x$
must belong to some interval, $I_{n}(x),$ that arises at step $n$ in the
standard construction of $E$. As pointed out in (\ref{LevelCompInfo}), $%
(1-2\lambda )s_{n}\leq a_{2^{n}}$. Since the gaps that bound $I_{n}(x)$ (one
of which may be unbounded) have length at least $a_{2^{n}},$ we see that $%
B(x,R)\tbigcap E$ $\subseteq I_{n}(x)$, so 
\begin{equation*}
N_{r}(B(x,R)\tbigcap E)\leq N_{r}(I_{n}(x)\tbigcap E).
\end{equation*}%
As $R/r\sim s_{n}/s_{n+m}$ it will be enough to prove that almost surely $%
N_{r}(I_{n}(x)\tbigcap E)\leq 2\left( \frac{s_{n}}{s_{n+m}}\right) ^{d}$ for 
$n$ sufficiently large. In other words, we want to prove that it is with
probability zero that for infinitely many $n$ there are intervals $I_{n}$ of
level $n$ and integers $m\geq \phi (n)$ such that 
\begin{equation*}
N_{s_{n+m}}(I_{n}\tbigcap E)>2\left( \frac{s_{n}}{s_{n+m}}\right) ^{d}.\text{
}
\end{equation*}%
This will be a Borel Cantelli argument.

To begin, choose $L=L(n,m)$ so that 
\begin{equation*}
\sum_{j=2^{n+m+L}}^{\infty }a_{j}=2^{n+m+L}s_{n+m+L}\leq s_{n+m+1}\text{.}
\end{equation*}%
Note that $2^{i}s_{i}\leq (2\lambda )^{i}$ and $s_{n+m}\geq \tau ^{n+m}$, so
we may take $L=C(n+m)$ for a suitable constant $C>0$.

Temporarily fix interval $I_{n}$. It is known that $N_{r}(I)$ $\sim
N_{r}(I^{\prime })$ whenever $I,I^{\prime }$ are rearrangements of the same
set of gaps (see \cite[Lemma 4.1]{GHMPhi}), hence there is no loss of
generality in assuming the gaps which are placed in $I_{n}$ in the
construction of $E$ at subsequent (deeper) levels, are placed in decreasing
order.

The choice of $L$ ensures that the gaps placed in $I_{n}$ after level $%
n+m+L-1$ have total length at most $r$ and thus one interval of length $r$
will cover these in totality.

Let 
\begin{eqnarray*}
\Lambda _{0}(I_{n}) &=&\#\text{ gaps in }I_{n}\text{ from levels }n+1,...,n+m%
\text{ } \\
\Lambda _{k}(I_{n}) &=&\#\text{ gaps in }I_{n}\text{ from level }n+m+k\text{
for }k=1,...,L-1.
\end{eqnarray*}%
The gaps of level $n+m+k$ each have length comparable to $s_{n+m+k}$ and
thus for each $k$, the totality of these gaps will be covered by%
\begin{equation*}
\Lambda _{k}(I_{n})\frac{s_{n+m+k}}{s_{n+m}}
\end{equation*}%
intervals of length $r$. The gaps of levels $n+1,...,n+m$ can be covered by $%
\Lambda _{0}(I_{n})$ intervals of length $r$, thus an upper bound on $%
N_{r}(I_{n}\tbigcap E)$ is given by 
\begin{equation*}
N_{r}(I_{n}\tbigcap E)\lesssim \Lambda _{0}(I_{n})+\sum_{k=1}^{L-1}\Lambda
_{k}(I_{n})\frac{s_{n+m+k}}{s_{n+m}}+1.
\end{equation*}%
We wish to compare this to%
\begin{equation*}
\left( \frac{R}{r}\right) ^{d}\sim \left( \frac{s_{n}}{s_{n+m}}\right) ^{d}
\end{equation*}

If $N_{r}(I_{n}\tbigcap E)>2\left( \frac{s_{n}}{s_{n+m}}\right) ^{d}$, then 
\begin{equation}
\Lambda _{k}(I_{n})\frac{s_{n+m+k}}{s_{n+m}}>\frac{1}{L}\left( \frac{s_{n}}{%
s_{n+m}}\right) ^{d}  \label{Lk}
\end{equation}%
for some $k=0,1,...,L-1.$

There are $2^{n}$ such intervals $I_{n}$ for each $n$; temporarily label
them as $I_{n}^{(i)},$ $i=1,...,2^{n}$. The probability that at least one of
these intervals, $I_{n}^{(i)},$ satisfies condition (\ref{Lk}) for some $%
k=0,1,...,L,$ is at most $\sum_{i=1}^{2^{n}}p_{i,k}$ where%
\begin{equation*}
p_{i,k}=\mathcal{P}\left( \Lambda _{k}(I_{n}^{(i)})\geq \frac{1}{L}\left( 
\frac{s_{n+m}}{s_{n+m+k}}\right) \left( \frac{s_{n}}{s_{n+m}}\right) ^{d}%
\text{ }\right) .
\end{equation*}

Choose $\varepsilon >0$ such that $d-\varepsilon >\overline{\dim }_{\Phi
}C_{a}$. The choice of $\phi (n)$ and the formula for the upper $\Phi $%
-dimension of a Cantor set (\ref{Thetadim}) ensures that for large enough $n$
and all $m\geq \phi (n)$, 
\begin{equation*}
\left( \frac{s_{n}}{s_{n+m}}\right) ^{d-\varepsilon }\geq 2^{m}.
\end{equation*}%
Since $s_{i}/s_{i+1}\geq 1/\lambda =b>2,$ 
\begin{equation*}
\frac{s_{n+m}}{s_{n+m+k}}\geq b^{k}\text{ and }\left( \frac{s_{n}}{s_{n+m}}%
\right) ^{\varepsilon }\geq b^{\varepsilon m}\text{.}
\end{equation*}%
Thus for each $i=1,...,2^{n}$ and $k=0,...,L-1,$ 
\begin{equation*}
p_{i,k}\leq \mathcal{P}\left( \Lambda _{k}(I_{n}^{(i)})\geq \frac{1}{L}2^{m}%
\text{ }b^{k}b^{\varepsilon m}\right)
\end{equation*}

Let $Y_{i,k}$ be the random variable that counts the number of gaps of level 
$n+m+k$ for $k\in \{1,...,L-1\}$ (or levels $n+1,...,n+m$ if $k=0$) in
interval $I_{n}^{(i)}$ given that the gaps are placed uniformly among the $%
2^{n}$ such intervals. With this notation 
\begin{equation*}
p_{i,k}\leq \mathcal{P}\left( Y_{i,k}\geq \frac{1}{C(n+m)}2^{m}\text{ }%
b^{k}b^{\varepsilon m}\right)
\end{equation*}%
(with the appropriate modification for $k=0$.) The function $Y_{i,k}$ is a
binomially distributed random variable with distribution $%
B(2^{n+m+k-1},2^{-n})$ (or $B\left( \sum_{i=1}^{m}2^{n+i},2^{-n}\right) $ if 
$k=0$) as there are $2^{n+m+k-1}$ gaps (or $\sum_{i=1}^{m}2^{n+i-1}$ many
gaps) to be placed in $2^{n}$ positions, thus our strategy to bound $p_{i,k}$
is to use Corollary \ref{prob}, taking the $M$ in that corollary to be $%
2^{n+m+k-1}$ (with the obvious modification if $k=0$), and the $N$ to be $n, 
$ so $M2^{-N}\sim 2^{m+k}$. \texttt{\ }

Since $b>2$, incorporating this notation gives 
\begin{equation*}
\frac{1}{C(n+m)}2^{m}\text{ }b^{k}b^{\varepsilon m}\geq \frac{1}{C(n+m)}%
M2^{-N}b^{\varepsilon m}.
\end{equation*}%
The assumption $\phi (n)$ $>>\log n$ guarantees that for large enough $n$,
depending on $\varepsilon ,$ $b^{\varepsilon m}>>n+m$. Consequently,
Corollary \ref{prob} implies%
\begin{equation*}
p_{i,k}\leq \exp \left( -c2^{m+k}\right)
\end{equation*}%
for a suitable constant $c>0$. Hence

\begin{equation*}
\sum_{i=1}^{2^{n}}\sum_{k=0}^{L}p_{i,k}\lesssim 2^{n}\exp \left(
-c2^{m}\right) .
\end{equation*}%
If we write $m=\phi (n)+J$ for $J\in \mathbb{N}$, then the term $2^{n}\exp
\left( -c2^{m}\right) $ is dominated by $\gamma ^{n+J}$ for some $\gamma <1$
because $\phi (n)>>\log n$. Hence for large enough $n$, the probability that
there is any interval $I_{n}^{(i)}$ at level $n$ and integer $m\geq \phi (n)$
with $N_{s_{n+m}}(I_{n}^{(i)}\tbigcap E)>2\left( \frac{s_{n}}{s_{n+m}}%
\right) ^{d}$ is at most 
\begin{eqnarray*}
\sum_{m=\phi (n)}^{\infty }\mathcal{P}\left( \exists \text{ }i\text{ with }%
N_{s_{n+m}}(I_{n}^{(i)}\tbigcap E)>2\left( \frac{s_{n}}{s_{n+m}}\right) ^{d}%
\text{ }\right) &\lesssim &\sum_{m=\phi (n)}^{\infty
}\sum_{i=1}^{2^{n}}\sum_{k=0}^{L-1}p_{i,k} \\
&\lesssim &\sum_{J=0}^{\infty }\gamma ^{n+J}\lesssim \gamma ^{n}\text{.}
\end{eqnarray*}

This shows that if $F_{n}$ is the event that there is any interval $I$ at
level $n$ and any $m\geq \phi (n)$ with $N_{s_{n+m}}(I\tbigcap E)>2\left(
s_{n}/s_{n+m}\right) ^{d},$ then

\begin{equation*}
\sum_{n=1}^{\infty }\mathcal{P}(F_{n})\lesssim \sum_{n=1}^{\infty }\gamma
^{n}<\infty .
\end{equation*}%
An application of the Borel Cantelli lemma proves that $\mathcal{P}\left(
F_{n}\text{ i.o.}\right) =0$ and that is what we desired to prove. Thus $%
\overline{\dim }_{\Phi }E\leq d$ almost surely.
\end{proof}

\subsection{Almost sure upper dimensions for \textquotedblleft
small\textquotedblright\ $\Phi $}

\label{subsec:uppersmall}

\begin{theorem}
\label{TheoremB} \label{asuppersmall} Let $a$ be a level comparable
sequence. For a.e. $E\in \mathcal{C}_{a}$ we have $\overline{\dim }_{\Phi
}E=1$ if 
\begin{equation*}
\Phi (x)<<\frac{\log \left\vert \log (x)\right\vert }{|\log x|}\text{ for }x%
\text{ near }0,
\end{equation*}%
equivalently, $\phi (n)<<\log n$.
\end{theorem}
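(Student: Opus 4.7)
The upper bound $\overline{\dim}_\Phi E \leq \dim_A[0,1]=1$ is automatic, so the plan is to establish the matching lower bound $\overline{\dim}_\Phi E \geq 1$ almost surely. This amounts to exhibiting, for every fixed $\alpha<1$ and for infinitely many $n$, a point $x_n \in E$ and admissible scales $r_n \leq R_n^{1+\Phi(R_n)}$ with $N_{r_n}(B(x_n,R_n)\cap E)\gtrsim(R_n/r_n)^\alpha$; letting $\alpha\to 1$ then gives the theorem.

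The heart of the argument is a balls-in-bins maximum-load estimate. Since the $2^n$ gaps at level $n+1$ are placed essentially uniformly and independently among the $2^n$ level-$n$ intervals, the count $Y_i^{(n)}$ of level-$(n+1)$ gaps inside the $i$-th interval is marginally $B(2^n,2^{-n})$, i.e.\ approximately Poisson with mean $1$. A Stirling computation then gives $\mathcal{P}(Y_i^{(n)}\geq c_0 n/\log n)\gtrsim e^{-c_0 n}$ for any $c_0<\log 2$, so a second-moment calculation for the occupancy count $\#\{i : Y_i^{(n)}\geq c_0 n/\log n\}$, in which the pairwise covariances are bounded by the product of the means using the exchangeability of the random order, yields $\mathcal{P}(M_n<c_0 n/\log n)\leq e^{-\gamma n}$ where $M_n:=\max_i Y_i^{(n)}$. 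Borel--Cantelli then implies $M_n\geq c_0 n/\log n$ almost surely for all sufficiently large $n$. Separately, a similar $k$-th moment estimate on the length $|I_i^{(n)}|=\sum_{j\geq 2^n}a_j\mathbf{1}[G_j\subset I_i^{(n)}]$, whose summands are bounded by $a_{2^n}\sim s_n$, shows that $|I_i^{(n)}|$ has sub-exponential tails on scale $s_n$, and hence almost surely $\max_i|I_i^{(n)}|\leq Cn s_n$ for all large $n$.

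With these two inputs the rest is a direct calculation. Let $I_n^*$ be a level-$n$ interval achieving $M_n$, let $x_n$ be its midpoint, and set $R_n=|I_n^*|/2$ and $r_n=a_{2^{n+1}-1}/3$. Level comparability gives $r_n\sim s_{n+1}$ and ensures every level-$(n+1)$ gap has length exceeding $2r_n$, so the $M_n$ such gaps inside $I_n^*\subset B(x_n,R_n)$ carve $I_n^*\cap E$ into at least $M_n$ pieces of pairwise distance $>2r_n$; hence $N_{r_n}(B(x_n,R_n)\cap E)\geq M_n$. Since $R_n/r_n\lesssim ns_n/s_{n+1}\lesssim n$, the ratio
\begin{equation*}
\frac{N_{r_n}(B(x_n,R_n)\cap E)}{(R_n/r_n)^\alpha}\;\gtrsim\;\frac{n/\log n}{n^\alpha}\;=\;\frac{n^{1-\alpha}}{\log n}\;\longrightarrow\;\infty
\end{equation*}
for every fixed $\alpha<1$. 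The admissibility $r_n\leq R_n^{1+\Phi(R_n)}$ reduces, after taking logs and using $|\log R_n|\sim n$, to $n\Phi(R_n)\lesssim\log n$, which is precisely the content of the hypothesis $\Phi(x)\ll\log|\log x|/|\log x|$. The main obstacle I anticipate is making the max-load lower bound rigorous: the $Y_i^{(n)}$ are weakly coupled and the level-$n$ geometry is itself random, but the second-moment computation above is the right tool, exploiting that the exchangeability of the random order forces $\mathcal{P}(Y_i^{(n)}\geq K,\,Y_j^{(n)}\geq K)$ to be comparable to $\mathcal{P}(Y_1^{(n)}\geq K)^2$.
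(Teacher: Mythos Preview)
Your approach is correct but follows a genuinely different route from the paper. The paper counts gaps from levels $n+1,\ldots,n+\phi(n)$, cites the Raab--Steger balls-in-bins theorem to obtain only $\mathcal{P}(M_n>K_n)\geq 1/2$ with $K_n\sim n/\log n$, and then passes to a sparse subsequence so that the events become independent and the second Borel--Cantelli lemma applies; its scales are $r_n=a_{2^{n+\phi(n)+1}}$ and $R_n=L_n+M_na_{2^n}$ (with $L_n$ the total length of the deeper gaps in $I_n$), which automatically bounds $|I_n|$ and thus avoids any separate length lemma at the cost of a case split on $M_n$ versus $L_n/r_n$. You instead count only level-$(n+1)$ gaps, prove the max-load lower bound from scratch via the second-moment method (negative association of multinomial coordinates gives $\mathcal{P}(Y_i\geq K,\,Y_j\geq K)\leq\mathcal{P}(Y_i\geq K)^2$ directly), and obtain the stronger exponential estimate that yields $M_n\gtrsim n/\log n$ for \emph{all} large $n$; taking $r_n$ at level $n+1$ also keeps $R_n/r_n\lesssim n$ with no $\tau^{-\phi(n)}$ factor, making the final ratio cleaner. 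The price is your extra lemma $\max_i|I_i^{(n)}|\lesssim ns_n$: this is true and sharper than the paper's later bound $s_n^{1-4\log n/n}$, but your ``$k$-th moment'' sketch is vague---the clean proof conditions on the spacing width $p_i$ (with tail $\mathcal{P}(p_i>x)=(1-x)^{2^n-1}$) and applies Bernstein's inequality to the weighted Bernoulli sum. Two small fixes: take $x_n$ to be an endpoint of $I_n^*$ (so that $x_n\in E$) rather than the midpoint, and for admissibility note that you also need the \emph{lower} bound $R_n/r_n\geq (3/2)M_n\gtrsim n/\log n$, so that $\log(R_n/r_n)\gtrsim\log n$ dominates $\Phi(R_n)|\log R_n|\ll\log n$.
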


Notice that if we take $\phi =0$ we get the result for the Assouad dimension.

\begin{corollary}
For almost all rearrangements $E\in \mathcal{C}_{a},\dim _{A}E=1$.
\end{corollary}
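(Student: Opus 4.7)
The plan is to deduce the corollary immediately from Theorem \ref{TheoremB} by specializing the dimension function. Taking $\Phi \equiv 0$, the constraint $r \leq R^{1+\Phi(R)}$ becomes simply $r \leq R$, so by definition $\overline{\dim}_\Phi E = \dim_A E$; this is exactly the first bulleted example of a $\Phi$-dimension in Section \ref{sec:Notation}. It therefore suffices to verify that $\Phi \equiv 0$ satisfies the hypothesis of Theorem \ref{TheoremB}, i.e.\ $\Phi(x) << \log|\log x|/|\log x|$ as $x \to 0$, equivalently $\phi(n) << \log n$.

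For $\Phi \equiv 0$ this is trivial on both sides: the ratio $\Phi(x)/(\log|\log x|/|\log x|)$ is identically zero, and correspondingly the depth function $\phi$ is bounded (from $s_{n+1} \leq s_n = s_n^{1+\Phi(s_n)}$ we get $\phi(n) \leq 1$), which is certainly $o(\log n)$. Invoking Theorem \ref{TheoremB} then yields
\begin{equation*}
\dim_A E \;=\; \overline{\dim}_\Phi E \;=\; 1 \quad \text{for a.e. } E \in \mathcal{C}_a.
\end{equation*}

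There is essentially no obstacle here: the entire probabilistic content of the corollary is carried by Theorem \ref{TheoremB}, and the corollary is simply the degenerate specialization flagged in the remark preceding it. If one prefers a strictly positive dimension function, one can instead take $\Phi(x) = 1/|\log x|$; by the proposition in Section \ref{sec:Notation} recording basic properties of $\Phi$-dimensions, any $\Phi$ with $\Phi(x) \leq c/|\log x|$ still produces the Assouad dimension, and such a $\Phi$ clearly satisfies $\Phi(x) << \log|\log x|/|\log x|$ since $\log|\log x| \to \infty$ as $x \to 0$.
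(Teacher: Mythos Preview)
Your proposal is correct and is exactly the paper's approach: the paper simply remarks, immediately before stating the corollary, that taking $\phi=0$ in Theorem~\ref{TheoremB} gives the Assouad case, and offers no further proof. Your verification that $\Phi\equiv 0$ (equivalently $\phi$ bounded) satisfies the smallness hypothesis $\phi(n)<<\log n$ is the only thing to check, and you have done it.
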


\begin{corollary}
The set of uniformly disconnected rearrangements in $\mathcal{C}_{a}$ is of
measure zero.
\end{corollary}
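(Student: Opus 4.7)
The plan is to derive the statement directly from the preceding corollary, which gives $\dim_A E = 1$ almost surely. The missing ingredient is the standard characterization that a compact $E \subseteq \mathbb{R}$ which is uniformly disconnected must have Assouad dimension strictly less than one; taking the contrapositive, any $E$ with $\dim_A E = 1$ cannot be uniformly disconnected, and therefore the set of uniformly disconnected rearrangements in $\mathcal{C}_a$ is contained in the almost-sure null set $\{E \in \mathcal{C}_a : \dim_A E < 1\}$.

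So the real content is in verifying the auxiliary implication ``uniformly disconnected $\Rightarrow \dim_A E < 1$'' for subsets of $\mathbb{R}$. Uniform disconnectedness on the real line amounts to the existence of a constant $c \in (0,1)$ such that between any two points $x < y$ of $E$ lies a gap of $E$ of length at least $c(y-x)$. Iterating this splitting across any interval $[x,y]$ with endpoints in $E$ produces, at step $k$, at most $2^k$ subintervals whose child widths at each stage satisfy $|I_j^L| + |I_j^R| \leq (1-c)|I_j|$. A routine H\"older-type comparison applied to this Moran-type data then yields a uniform covering bound $N_r(B(x,R)\tbigcap E) \lesssim (R/r)^s$ with $s = \log 2 / \log(2/(1-c)) < 1$, hence $\dim_A E \leq s < 1$.

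The only step warranting attention is ensuring that the covering estimate is uniform in both location and scale, which is precisely what distinguishes the Assouad dimension from the box dimension; this uniformity is automatic from the definition of uniform disconnectedness, since the splitting constant $c$ is valid at every pair of points of $E$ and at every scale. With this in hand the corollary is immediate: almost every $E \in \mathcal{C}_a$ has $\dim_A E = 1$ by the preceding corollary, so almost every such $E$ fails to be uniformly disconnected.
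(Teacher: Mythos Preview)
Your approach is essentially the same as the paper's: both deduce the corollary from the preceding fact that $\dim_A E = 1$ almost surely, combined with the characterization that a subset of $\mathbb{R}$ is uniformly disconnected (equivalently, porous) if and only if $\dim_A E < 1$. The paper simply cites \cite{MT} for this equivalence, whereas you sketch the forward implication via a Moran-type iteration; your sketch is sound in spirit, though your phrasing of uniform disconnectedness (a gap of size $\geq c(y-x)$ strictly between any two points $x,y\in E$) is a slight reformulation of the standard porosity condition rather than its literal statement.
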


\begin{proof}
This is immediate from the fact that a subset $E$ of $\mathbb{R}$ is
uniformly disconnected (or porous) if and only if $\dim _{A}E<1$ (cf. \cite%
{MT}).
\end{proof}

\begin{proof}[Proof of Theorem \ref{TheoremB}] This will require us to prove that almost
surely there are $x_{n}\in E$, $R_{n}\rightarrow 0$ and $r_{n}\leq
R_{n}^{1+\Phi (R_{n})}$ satisfying%
\begin{equation*}
N_{r_{n}}(B(x_{n},R_{n})\tbigcap E)\gtrsim \left( \frac{R_{n}}{r_{n}}\right)
^{1-\varepsilon }
\end{equation*}%
for each fixed $\varepsilon >0$.

Put $\phi (n)=g(n)\log n$ where $g(n)\rightarrow 0$. Let $M_{n}(E)$ be the
random variable that counts the maximum number of gaps of levels $%
n+1,...,n+\phi (n)$ placed in any one of the intervals arising at step $n$
in the construction of $E$. Let 
\begin{equation*}
K_{n}=\frac{2\log 2^{n}}{\log \left( \frac{2^{n}\log 2^{n}}{2^{n+\phi (n)}}%
\right) }\sim \frac{n}{\log n}.
\end{equation*}%
There are a total of $\sim 2^{n+\phi (n)}$ of these gaps to place in the $%
2^{n}$ intervals. As 
\begin{equation*}
2^{n+\phi (n)}=2^{n}2^{g(n)\log n}=2^{n}n^{g(n)\log 2}<<2^{n}\log (2^{n}),
\end{equation*}%
Theorem 1 of \cite{RS} guarantees that $\mathcal{P}(M_{n}>K_{n})\geq 1/2$
for large enough $n$. (Our $K_{n}$ can be taken as $k_{1/2}$ in their
notation.)

Let $I_{n}$ denote the interval of step $n$ containing the maximum number of
gaps of levels $n+1,...,n+\phi (n)$ and let $L_{n}$ denote the sum of the
lengths of the gaps of levels deeper than $n+\phi (n)$ that are contained in 
$I_{n}$. Since the sequence $a$ is decreasing, 
\begin{equation*}
L_{n}+M_{n}a_{2^{n+\phi (n)}}\leq \text{length }I_{n}\leq
L_{n}+M_{n}a_{2^{n}}.
\end{equation*}

Put 
\begin{equation*}
R_{n}=L_{n}+M_{n}a_{2^{n}}\text{ and }r_{n}=a_{2^{n+\phi (n)+1}}.
\end{equation*}%
Then the ball centred at an endpoint $x_{n}$ of $I_{n}$ and radius $R_{n}$
contains $I_{n}$. As noted in the proof of the previous theorem, there is no
loss of generality in assuming the gaps are placed in $I_{n}$ in decreasing
order. Hence%
\begin{equation*}
N_{r_{n}}(B(x_{n},R_{n})\tbigcap E)\gtrsim M_{n}+\frac{L_{n}}{r_{n}}\text{,}
\end{equation*}%
while 
\begin{equation*}
\frac{R_{n}}{r_{n}}=M_{n}\frac{a_{2^{n}}}{a_{2^{n+\phi (n)}}}+\frac{L_{n}}{%
r_{n}}\lesssim M_{n}\tau ^{-\phi (n)}+\frac{L_{n}}{r_{n}},
\end{equation*}%
since $a_{2^{j}}/a_{2^{j+1}}\sim s_{j}/s_{j+1}\leq 1/\tau $ by the level
comparable assumption.

Fix $\varepsilon >0$ and consider%
\begin{eqnarray*}
\frac{N_{r_{n}}(B(x_{n},R_{n})\cap E)}{\left( \frac{R_{n}}{r_{n}}\right)
^{1-\varepsilon }} &\gtrsim &\frac{M_{n}+\frac{L_{n}}{r_{n}}}{\left(
M_{n}\tau ^{-\phi (n)}+\frac{L_{n}}{r_{n}}\right) ^{1-\varepsilon }}=\frac{%
M_{n}+\frac{L_{n}}{r_{n}}}{\left( M_{n}n^{g(n)|\log \tau |}+\frac{L_{n}}{%
r_{n}}\right) ^{1-\varepsilon }} \\
&\geq &\frac{M_{n}+\frac{L_{n}}{r_{n}}}{\left( M_{n}n^{g(n)|\log \tau
|}\right) ^{1-\varepsilon }+\left( \frac{L_{n}}{r_{n}}\right)
^{1-\varepsilon }}.
\end{eqnarray*}%
If $M_{n}\geq L_{n}/r_{n}$, then for large $n$ this ratio is at least%
\begin{equation*}
\frac{M_{n}^{\varepsilon }}{2n^{g(n)|\log \tau |(1-\varepsilon )}}\gtrsim 
\frac{M_{n}^{\varepsilon }}{n^{\varepsilon /2}}
\end{equation*}%
since $g(n)\rightarrow 0$. It follows that if $M_{n}\geq K_{n}$ and $n$ is
sufficiently large, then 
\begin{equation*}
\frac{N_{r_{n}}(B(x_{n},R_{n})\tbigcap E)}{\left( \frac{R_{n}}{r_{n}}\right)
^{1-\varepsilon }}\succsim \frac{n^{\varepsilon /2}}{\left( \log n\right)
^{\varepsilon }}\geq 1.
\end{equation*}%
Similar arguments give the same conclusion if, instead, $K_{n}\leq M_{n}\leq
L_{n}/r_{n}$.

We conclude that there are $x_{n},R_{n},r_{n}$ as outlined above, with%
\begin{equation*}
N_{r_{n}}(B(x_{n},R_{n})\tbigcap E)\gtrsim \left( \frac{R_{n}}{r_{n}}\right)
^{1-\varepsilon }
\end{equation*}%
whenever $M_{n}\geq K_{n}$. Since $M_{n}$ depends only on levels $%
n+1,...,n+\phi (n)$ and $K_{n}$ only on $n$, if we choose a sequence $%
n_{k}\rightarrow \infty $ such that $n_{k+1}>>n_{k}+\phi (n_{k})$, then the
sets $\{M_{n_{k}}\geq K_{n_{k}}\}$ are independent events, each occurring
with probability at least $1/2$. The Borel Cantelli lemma implies that these
events occur infinitely often with probability one. Thus almost surely 
\begin{equation*}
N_{r_{n}}(B(x_{n},R_{n})\tbigcap E)\gtrsim \left( \frac{R_{n}}{r_{n}}\right)
^{1-\varepsilon }
\end{equation*}%
for infinitely many $n$. Moreover, for such $n$ we have%
\begin{equation*}
R_{n}\gtrsim M_{n}a_{2^{n}}\gtrsim K_{n}s_{n}\gtrsim \frac{n}{\log n}s_{n}.
\end{equation*}%
Thus $(R_{n})^{1+\Phi (R_{n})}\gtrsim s_{n}^{1+\Phi (s_{n})}\sim s_{n+\phi
(n)}\sim r_{n}$.$\ $\texttt{\ }Also, since $M_{n}\lesssim 2^{n+\phi (n)},$ 
\begin{eqnarray*}
R_{n} &\lesssim &\sum_{i>n+\phi (n)}2^{i}a_{2^{i}}+M_{n}a_{2^{n}}\lesssim
\sum_{i>n+\phi (n)}(2\lambda )^{i}+2^{n+\phi (n)}\lambda ^{n} \\
&\lesssim &\left( 2\lambda \right) ^{n}2^{\phi (n)}=\left( 2\lambda \right)
^{n}n^{g(n)\log 2}\rightarrow 0\text{ as }n\rightarrow \infty
\end{eqnarray*}%
since $\lambda <1/2.$

This suffices to prove that with probability one, $\overline{\dim }_{\Phi
}E\geq 1-\varepsilon $ for each $\varepsilon >0$ and that completes the
argument.
\end{proof}

\section{Almost Sure Lower Dimensions for Complementary sets}

\label{sec:aslowerdim}

The almost sure results for lower $\Phi $-dimensions will again make use of
the probabilistic result, Corollary \ref{prob}, but will also use the fact
that it is \textquotedblleft quite likely\textquotedblright\ that some
interval at step $n$ will contain no gaps from levels $n+1,...,n+\log n$.
(This will be made precise in the proof.) In addition, for the case of
\textquotedblleft large\textquotedblright\ $\Phi ,$ we will also use
estimates on the size of the intervals created at step $n$ in the rearranged
set.

\subsection{Almost sure lower dimensions for \textquotedblleft
small\textquotedblright\ $\Phi $}

\label{subsec:lowersmall}

We will begin with the \textquotedblleft small\textquotedblright\ $\Phi $
case. We remark that any $E\in \mathcal{C}_{a}$ which admits an isolated
point has lower $\Phi $-dimension zero. However, these form a null set in $%
\mathcal{C}_{a}$ and thus are not of interest to us.

\begin{theorem}
\label{aslowersmall} Let $a=\{a_{n}\}$ be a level comparable sequence. For
almost every $E\in \mathcal{C}_{a}$ we have \underline{$\dim $}$_{\Phi }E=0$
if 
\begin{equation*}
\Phi (x)<<\frac{\log \left\vert \log (x)\right\vert }{|\log x|}\text{ for }x%
\text{ near }0,
\end{equation*}%
equivalently, $\phi (n)<<\log n$.
\end{theorem}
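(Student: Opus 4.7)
The plan is to show, for each $\alpha > 0$, that almost surely there exist sequences $x_n \in E$, $R_n \to 0$ and $r_n \leq R_n^{1+\Phi(R_n)}$ with
\[
\frac{N_{r_n}(B(x_n, R_n) \cap E)}{(R_n/r_n)^{\alpha}} \longrightarrow 0;
\]
since $\alpha > 0$ is arbitrary this forces $\underline{\dim}_\Phi E = 0$. The driving probabilistic input, signposted in the introduction to Section \ref{sec:aslowerdim}, is the existence of \emph{empty} intervals. Setting $m_n := \lfloor \log n \rfloor$, I will prove that almost surely, for infinitely many $n$, some interval $I_n$ at step $n$ contains no gap from levels $n+1, \ldots, n+m_n$, and that one may additionally require $\ell(I_n) \leq 2 s_{n+m_n}$. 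This is the reverse analogue of the max-density interval argument driving Theorem \ref{asuppersmall}.

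For the occupancy estimate I exploit the hierarchical random model: an interval that is empty through level $n+k-1$ is still a single urn at step $n+k$, so the step-$(n+k)$ multinomial gives
\[
\mathcal{P}(I \text{ receives no level-}(n+k) \text{ gap} \mid I \text{ empty before}) = (1 - 2^{-(n+k-1)})^{2^{n+k-1}} \sim e^{-1}.
\]
Multiplying across $k = 1, \ldots, m_n$ yields $\mathcal{P}(I \text{ empty through level } n+m_n) \sim e^{-m_n} = 1/n$, so $E[Z_n] \sim 2^n/n \to \infty$ where $Z_n$ counts the empty intervals. A parallel two-bin computation, in which two specified intervals both remain empty (contributing $\sim e^{-2}$ per level), gives $E[Z_n^2] \sim E[Z_n]^2 + E[Z_n]$, so by Chebyshev $\mathcal{P}(Z_n \geq 1) \to 1$. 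Taking a sparse subsequence $n_k$ with $n_{k+1} > n_k + m_{n_k}$ (say $n_k = 2^k$), a conditional Borel--Cantelli applied relative to the filtration generated by gap placements through level $n_k + m_{n_k}$ (under which the level-ranges for different $k$ are independent) produces empty intervals $I_{n_k}$ at infinitely many $k$ almost surely. A Markov-inequality refinement, using that by symmetry $\mathbb{E}[\ell(I) \mid I \text{ empty}] = s_{n+m_n}$, lets me additionally arrange $\ell(I_{n_k}) \leq 2 s_{n_k + m_{n_k}}$.

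Given such $I_{n_k}$, set $x_{n_k}$ equal to its left endpoint (which lies in $E$), $R_{n_k} := a_{2^{n_k}}/2$, and $r_{n_k} := \ell(I_{n_k})$. Level comparability gives $R_{n_k} \sim s_{n_k}$ and $a_{2^{n_k}} \geq (1-2\lambda) s_{n_k}$, so the two gaps immediately adjacent to $I_{n_k}$ each have length at least $2 R_{n_k}$; consequently $B(x_{n_k}, R_{n_k})$ meets no $E$-point outside $I_{n_k}$, and $B(x_{n_k}, R_{n_k}) \cap E = I_{n_k} \cap E$ has diameter $\leq 2 r_{n_k}$, so is covered by $O(1)$ balls of radius $r_{n_k}$. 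The scale constraint $r_{n_k} \leq R_{n_k}^{1+\Phi(R_{n_k})} \sim s_{n_k + \phi(n_k)}$ holds because $\phi(n_k) \ll \log n_k = m_{n_k}$ forces $s_{n_k + m_{n_k}} \leq s_{n_k + \phi(n_k)}$. Finally $R_{n_k}/r_{n_k} \gtrsim (1/\lambda)^{m_{n_k}} = n_k^{|\log \lambda|}$ grows polynomially in $n_k$, so for any $\alpha > 0$,
\[
\frac{N_{r_{n_k}}(B(x_{n_k}, R_{n_k}) \cap E)}{(R_{n_k}/r_{n_k})^{\alpha}} \lesssim \frac{1}{n_k^{\alpha |\log \lambda|}} \longrightarrow 0.
\]

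The main obstacle I anticipate is the occupancy step: the ``empty at level $n+k$'' events across $k$ are not jointly independent Bernoulli trials because of the hierarchical dependence, so both the second-moment computation and the passage to the conditional Borel--Cantelli must be executed honestly; this plays, in reverse, the role that Corollary \ref{prob} played in the proof of Theorem \ref{asuppersmall}. Once occupancy and the length estimate are in hand, the geometric finale is routine and uses only level comparability and the hypothesis $\phi(n) \ll \log n$.
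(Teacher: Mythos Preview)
Your strategy is cleaner than the paper's: where the paper fixes $r_n=s_{n+\phi(n)}$ and bounds $N_{r_n}(I\cap E)$ by a weighted sum $\sum_k \Lambda_k\, s_{n+\phi(n)+k}/s_{n+\phi(n)}$ of gap counts at each deeper level (controlling every $\Lambda_k$ via Corollary~\ref{prob}), you take $r=\ell(I)$ and collapse the whole covering estimate to $N_r=O(1)$. Both routes rest on the same ``empty bin'' phenomenon; your second-moment occupancy calculation is essentially the balls-into-bins theorem the paper cites, done by hand. Your expectation identity $\mathbb E[\ell(I)\mid I\text{ empty}]=s_{n+m_n}$ is correct (conditioning on emptiness turns $I$ into a generic level-$(n+m_n)$ interval).

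There is, however, a genuine gap at the Markov step. Your conditional Borel--Cantelli correctly delivers emptiness infinitely often, because the emptiness event at stage $k$ is a function of the multinomials at levels $n_k+1,\dots,n_k+m_{n_k}$, and these blocks are disjoint along the sparse subsequence. But the event $\{\ell(I_{n_k})\le 2s_{n_k+m_{n_k}}\}$ depends on the placement of \emph{every} gap at levels $>n_k+m_{n_k}$, in particular on the very levels governing stages $k+1,k+2,\dots$; it is not $\mathcal F_{n_k+m_{n_k}}$-measurable, and Markov's inequality only gives $\mathcal P(\cdot)\ge 1/2$ for each $k$ separately, which says nothing about infinitely many $k$ simultaneously. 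The remedy is precisely the truncation device the paper uses: choose a constant $D$ (level comparability gives $2^{Dn}s_{Dn}\le s_{n+m_n}$), write $\ell(I)\le \ell^{(Dn)}(I)+s_{n+m_n}$ where $\ell^{(Dn)}$ counts only gaps of level $\le Dn$, and apply Markov to $\ell^{(Dn)}(I)$ instead. The compound event ``empty through $n_k+m_{n_k}$ and $\ell^{(Dn_k)}(I)\le 2s_{n_k+m_{n_k}}$'' then depends only on levels $n_k+1,\dots,Dn_k$; sparsifying to $n_{k+1}>Dn_k$ restores genuine independence, a Paley--Zygmund bound gives probability $\ge c>0$, and Borel--Cantelli finishes. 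With this amendment your argument goes through and is indeed more elementary than the paper's level-by-level analysis.
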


\begin{corollary}
For almost all rearrangements $E\in \mathcal{C}_{a},$ dim$_{L}E=0$ a.s.
\end{corollary}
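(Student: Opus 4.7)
The plan is to derive this corollary as an immediate specialization of Theorem \ref{aslowersmall} to a trivially ``small'' dimension function, mirroring the way the corresponding upper-dimension corollary follows from Theorem \ref{TheoremB} via the remark that $\phi=0$ recovers the Assouad case.

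First, I would take $\Phi \equiv 0$. This is a valid dimension function (constant, so $R^{1+\Phi(R)} = R$ is decreasing), and it places us in the Assouad regime: by part (iii) of the basic Proposition on $\Phi$-dimensions (applied with $c=0$, since $\Phi(x) = 0 \leq c/|\log x|$ for any $c>0$), the lower $\Phi$-dimension coincides with the lower Assouad dimension, so
\begin{equation*}
\underline{\dim}_{\Phi} E = \dim_L E
\end{equation*}
for every $E$. Moreover, the associated depth function is $\phi \equiv 0$, which vacuously satisfies $\phi(n) \ll \log n$, so the hypothesis of Theorem \ref{aslowersmall} is met.

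Applying Theorem \ref{aslowersmall} with this choice of $\Phi$ then yields $\underline{\dim}_{\Phi} E = 0$ for almost every $E \in \mathcal{C}_a$, and combining with the identification above gives $\dim_L E = 0$ almost surely, as required. (Equivalently, one can simply use the monotonicity inequality $\dim_L E \leq \underline{\dim}_{\Phi} E$ valid for any dimension function $\Phi$: picking any single $\Phi$ that meets the hypothesis of Theorem \ref{aslowersmall} forces $\dim_L E \leq 0$ almost surely.)

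There is no real obstacle here: the entire probabilistic content, namely the Borel--Cantelli construction of intervals at level $n$ that are empty of gaps from levels $n+1,\dots,n+\log n$, has already been packaged inside Theorem \ref{aslowersmall}. The corollary is a one-line reduction via the choice $\Phi = 0$, in exact analogy with the derivation of $\dim_A E = 1$ a.s.\ from Theorem \ref{TheoremB}.
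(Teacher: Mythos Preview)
Your proposal is correct and matches the paper's approach: the corollary is stated without proof, immediately after Theorem~\ref{aslowersmall}, in exact parallel to the upper case where the paper explicitly remarks ``if we take $\phi=0$ we get the result for the Assouad dimension.'' Your specialization $\Phi\equiv 0$ (so $\phi\equiv 0\ll\log n$), together with the definitional identification $\dim_L E=\underline{\dim}_{0}E$, is precisely the intended one-line deduction.
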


\begin{corollary}
The set of uniformly perfect rearrangements in $\mathcal{C}_{a}$ is of
measure zero.
\end{corollary}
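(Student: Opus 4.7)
The plan is to derive this directly from the preceding corollary by invoking the classical dimensional characterization of uniform perfectness. Specifically, a compact subset $E\subseteq\mathbb{R}$ is uniformly perfect if and only if $\dim_L E>0$; this is the natural dual of the ``uniformly disconnected $\Leftrightarrow \dim_A E<1$'' criterion that was used a few lines earlier to deduce $\dim_A E=1$ a.s., and it is a standard part of the theory of Assouad-type dimensions (see, e.g., Mackay--Tyson or Heinonen's \emph{Lectures on analysis on metric spaces}).

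With this characterization accepted, the proof collapses to one line. Theorem \ref{aslowersmall} applied to $\Phi\equiv 0$ (which trivially satisfies $\Phi(x)<<\log|\log x|/|\log x|$) yields $\dim_L E=\underline{\dim}_{\Phi}E=0$ for almost every $E\in\mathcal{C}_a$, and any such $E$ therefore fails to be uniformly perfect. Consequently the set of uniformly perfect rearrangements is contained in the null set on which $\dim_L E>0$, and is itself null.

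I do not anticipate any real obstacle here; no fresh probabilistic input is required, and the only point to verify is the ``$\dim_L E>0\Leftrightarrow E$ uniformly perfect'' equivalence, which is well documented in the literature. A reader who prefers to bypass the black-box citation can instead unwind the definitions directly: Theorem \ref{aslowersmall} produces, for a.e. $E$, a sequence of points $x_n\in E$ and scales $r_n\leq R_n^{1+\Phi(R_n)}$ along which $N_{r_n}(B(x_n,R_n)\cap E)$ grows slower than any positive power of $R_n/r_n$; on the real line this forces the existence of arbitrarily ``thin'' annuli around $x_n$ that are disjoint from $E$, directly contradicting the definition of uniform perfectness.
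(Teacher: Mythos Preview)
Your proposal is correct and follows essentially the same route as the paper: the paper's proof is the one-line observation that $E$ is uniformly perfect if and only if $\dim_L E>0$ (citing \cite[Lemma~2.1]{KLV} rather than Mackay--Tyson or Heinonen), combined with the preceding corollary that $\dim_L E=0$ almost surely.
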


\begin{proof}
This follows as a set $E$ is uniformly perfect if and only if dim$_{L}E>0$,
by Lemma 2.1 in \cite{KLV}.
\end{proof}

\begin{proof}[Proof of Theorem \ref{aslowersmall}]
We will prove that for each $\varepsilon >0,$ \underline{$\dim $}$_{\Phi
}E\leq \varepsilon $ a.s. By definition, this is true if almost surely there
are $x_{n}\in E$, $R_{n}\rightarrow 0$ and $r_{n}\leq R_{n}^{1+\Phi (R_{n})}$
satisfying%
\begin{equation*}
N_{r_{n}}(B(x_{n},R_{n})\tbigcap E)\leq \left( \frac{R_{n}}{r_{n}}\right)
^{\varepsilon }\text{.}
\end{equation*}%
Choose $J\geq 1$ such that $s_{n}\leq a_{2^{n-J}}$ for all $n$. Put $%
R_{n}=s_{n}$ and $r_{n}=s_{n+\phi (n)}\leq R_{n}^{1+\Phi (n)}$. \ Label the
intervals arising at level $n-J$ in the construction of $E$ as $I_{n}^{(j)}$%
, $j=1,...,2^{n-J}$ and let $x_{n}=x_{n}(j)$ be an endpoint of interval $%
I_{n}^{(j)}$. The gap sizes ensure that $E\cap B(x_{n},R_{n})$ is contained
in $I_{n}^{(j)}$. Choose $D\in \mathbb{N}$ such that 
\begin{equation*}
\sum_{j=2^{n+\phi (n)+Dn}}^{\infty }a_{j}=2^{n+\phi (n)+Dn}s_{n+\phi
(n)+Dn}\leq r_{n}.
\end{equation*}

If the interval $I_{n}^{(j)}$ admits no gaps from levels $n-J+1,...,n+\phi
(n)+A\log n$ (where $A$ will be specified later) and $\Lambda _{k}^{(j)}$
gaps at each of levels $n+\phi (n)+k$ for $k=1+A\log n,....,Dn$, then 
\begin{equation*}
N_{r_{n}}(B(x_{n},R_{n})\tbigcap E)\leq N_{r_{n}}(I_{n}^{(j)}\tbigcap
E)\lesssim \sum_{k=1+A\log n}^{Dn}\Lambda _{k}^{(j)}\frac{s_{n+\phi (n)+k}}{%
s_{n+\phi (n)}}+1
\end{equation*}%
since the totality of the gaps of levels deeper than $n+\phi (n)+Dn$ can be
covered by one interval of radius $r_{n}$. We want to prove the quantity
above is bounded by $C\left( s_{n}/s_{n+\phi (n)}\right) ^{\varepsilon }$.

Let $F_{n}^{(j)}$ be the event that interval $I_{n}^{(j)}$ contains no gaps
from levels $n-J+1,...,n+\phi (n)+A\log n,$ but each interval $I_{n}^{(i)}$
for $i<j$ does contain at least one such gap. Let $G_{n}^{(j)}$ be the event
that 
\begin{equation}
\sum_{k=1+A\log n}^{Dn}\Lambda _{k}^{(j)}\frac{s_{n+\phi (n)+k}}{s_{n+\phi
(n)}}\leq 2\left( \frac{s_{n}}{s_{n+\phi (n)}}\right) ^{\varepsilon }.
\label{G}
\end{equation}%
If $F_{n}^{(j)}\tbigcap G_{n}^{(j)}$ is non-empty for some $j,$ then there
is a \textquotedblleft suitable\textquotedblright\ interval $I_{n}^{(j)},$
meaning, an interval at level $n-J$ which both admits no gaps from levels $%
n-J+1,...,n+\phi (n)+A\log n$ and has property (\ref{G}). As the events $%
F_{n}^{(j)}\tbigcap G_{n}^{(j)}$ are disjoint and the pairs $%
F_{n}^{(j)},G_{n}^{(j)}$ are independent (since the location of gaps at
different levels are independent), 
\begin{equation*}
\mathcal{P}(\exists \text{ suitable }I_{n}^{(j)})\geq \sum_{j=1}^{2^{n-J}}%
\mathcal{P}\left( F_{n}^{(j)}\tbigcap G_{n}^{(j)}\right) =\sum_{j}\mathcal{P}%
\left( F_{n}^{(j)}\right) \mathcal{P}\left( G_{n}^{(j)}\right) .
\end{equation*}

We first focus on $G_{n}^{(j)}$. For an appropriate constant $B>A+1,$ to be
specified later,%
\begin{eqnarray*}
\mathcal{P}\left( \left( G_{n}^{(j)}\right) ^{c}\text{ }\right) &\leq
&\sum_{k>A\log n}^{B\log n}\mathcal{P}\left( \Lambda _{k}^{(j)}\geq \left( 
\frac{s_{n}}{s_{n+\phi (n)}}\right) ^{\varepsilon }\frac{s_{n+\phi (n)}}{%
s_{n+\phi (n)+k}}\frac{1}{(B-A)\log n}\right) \\
&&+\sum_{k>B\log n}^{Dn}\mathcal{P}\left( \Lambda _{k}^{(j)}\geq \left( 
\frac{s_{n}}{s_{n+\phi (n)}}\right) ^{\varepsilon }\frac{s_{n+\phi (n)}}{%
s_{n+\phi (n)+k}}\frac{1}{Dn}\right) .
\end{eqnarray*}
If $k>A\log n$ then, since $s_{i}/s_{i+1}\geq 1/\tau >2$ for all $i,$ taking 
$\gamma =(2\tau )^{-1}>1$ gives 
\begin{eqnarray*}
\frac{1}{2^{k+\phi (n)}}\left( \frac{s_{n}}{s_{n+\phi (n)}}\right)
^{\varepsilon }\frac{s_{n+\phi (n)}}{s_{n+\phi (n)+k}}\frac{1}{\log n} &\geq
&\frac{\gamma ^{k}(2\tau ^{\varepsilon })^{-g(n)\log n}}{\log n} \\
&\geq &\frac{\gamma ^{k}n^{-g(n)\log (2\tau ^{\varepsilon })}}{\log n} \\
&\geq &\frac{n^{A\log \gamma -g(n)\log (2\tau ^{\varepsilon })}}{\log n}%
\rightarrow \infty
\end{eqnarray*}%
as $n\rightarrow \infty $. Thus if $n$ is sufficiently large, then 
\begin{equation*}
\left( \frac{s_{n}}{s_{n+\phi (n)}}\right) ^{\varepsilon }\frac{s_{n+\phi
(n)}}{s_{n+\phi (n)+k}}\frac{1}{(B-A)\log n}>>2^{k+\phi (n)}=\mathbb{E(}%
\Lambda _{k}^{(j)}).
\end{equation*}

Similarly, if $k>B\log n$, then 
\begin{eqnarray*}
\frac{1}{2^{k+\phi (n)}}\left( \frac{s_{n}}{s_{n+\phi (n)}}\right)
^{\varepsilon }\frac{s_{n+\phi (n)}}{s_{n+\phi (n)+k}}\frac{1}{Dn} &\geq &%
\frac{\gamma ^{k}(2\tau ^{\varepsilon })^{-g(n)\log n}}{Dn} \\
&\geq &\frac{n^{B\log \gamma -g(n)\log (2\tau ^{\varepsilon })}}{Dn}%
\rightarrow \infty ,
\end{eqnarray*}%
provided we choose $B$ so large that $B\log \gamma >1$. Hence, again, we
conclude that

\begin{equation*}
\left( \frac{s_{n}}{s_{n+\phi (n)}}\right) ^{\varepsilon }\frac{s_{n+\phi
(n)}}{s_{n+\phi (n)+k}}\frac{1}{Dn}>>2^{k+\phi (n)}=\mathbb{E(}\Lambda
_{k}^{(j)}).
\end{equation*}

Appealing to Corollary \ref{prob}, we deduce that%
\begin{eqnarray*}
\mathcal{P}\left( \left( G_{n}^{(j)}\right) ^{c}\right) &\leq &\sum_{k>A\log
n}^{B\log n}\exp (-c2^{k+\phi (n)})+\sum_{k>B\log n}^{Dn}\exp (-c2^{k+\phi
(n)}) \\
&\lesssim &\exp (-c2^{A\log n+\phi (n)})\leq 1/2
\end{eqnarray*}%
for large enough $n$. Thus $\mathcal{P(}G_{n}^{(j)})\geq 1/2$ for each $j.$

Next, we observe that $\sum_{j}\mathcal{P(}F_{n}^{(j)})$ is the probability
of there being an interval at level $n-J$ with no gaps from levels $%
n-J+1,...,n+\phi (n)+A\log n$. This is mathematically the same as the
problem of equally distributing $2^{n+\phi (n)+A\log n}-2^{n-J}\sim
2^{n+\phi (n)+A\log n}$ balls into $2^{n-J}$ bins and asking if one of the
bins is empty. The expected number of balls in a bin is $\sim 2^{\phi
(n)+A\log n}\leq n^{p}$ for $p<1$ if we choose $A$ sufficiently small and $n$
large. Thus 
\begin{equation*}
2^{n}\exp (-\text{Expected \# balls)}\geq 2^{n}\exp (-Cn^{p})\rightarrow
\infty .
\end{equation*}%
According to \cite[p. 111, Theorem 4]{Kbook}, the probability that there is
an empty bin tends to $1$ as $n\rightarrow \infty $.

Thus for $n$ sufficiently large, $\sum_{j}\mathcal{P(}F_{n}^{(j)})$ $\geq
1/2 $ and hence 
\begin{equation*}
\mathcal{P}(\exists \text{ suitable }I_{n}^{(j)})\geq \sum_{j}\mathcal{P}%
\left( F_{n}^{(j)}\right) \mathcal{P}\left( G_{n}^{(j)}\right) \geq 1/4
\end{equation*}%
if $n$ is large and $A,B$ are chosen suitably. Furthermore, this probability
depends only upon the placement of the gaps at levels $n-J+1,..,Dn,$ hence
if we pick a subsequence $\{n_{k}\}\rightarrow \infty $ with $%
n_{k+1}-J>>Dn_{k}$, these events are independent. By the Borel Cantelli
lemma the events occur infinitely often with probability one. In other
words, with probability one there are choices $x_{n}\in E$, $%
R_{n}\rightarrow 0$ and $r_{n}\leq R_{n}^{1+\Phi (R_{n})}$ satisfying%
\begin{equation*}
N_{r_{n}}(B(x_{n},R_{n})\tbigcap E)\leq \left( \frac{R_{n}}{r_{n}}\right)
^{\varepsilon }\text{ i.o.}
\end{equation*}%
and, as we observed at the beginning of the proof, this is sufficient to
show \underline{$\dim $}$_{\Phi }E=0$ a.s.
\end{proof}

\subsection{Almost sure lower dimensions for \textquotedblleft
large\textquotedblright\ $\Phi $}

\label{subsec:lowerlarge}

Before turning to the \textquotedblleft large\textquotedblright\ $\Phi $
case, we first establish a bound on the almost sure length of the intervals
of level $n$. This lemma will be useful in determining the almost sure
behaviour of the lower $\Phi $-dimensions because it will allow us to use
lower bounds for the covering numbers of intervals from the construction, in
place of covering numbers of arbitrary balls.

\begin{lemma}
For a.e. $E\in \mathcal{C}_{a},$ the maximum length of any interval of level 
$n$ in the construction of $E$ is at most $3Cs_{n}^{1-\varepsilon _{n}}$
where $C$ is chosen to satisfy $Cs_{j}\geq a_{2^{j-1}}$ for all $j$, $%
\varepsilon _{n}=(4\log n)/n$ and $n$ is sufficiently large.
\end{lemma}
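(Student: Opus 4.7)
My plan is a Borel--Cantelli argument. Let $M_n$ denote the maximum length of a level-$n$ interval in the construction of $E$; it suffices to show $\sum_n \mathbb{P}(M_n > 3Cs_n^{1-\varepsilon_n}) < \infty$. Since there are $2^n$ intervals at level $n$ and these are exchangeable under the random-insertion model, the union bound reduces matters to showing
\[
\mathbb{P}(|I| > 3Cs_n^{1-\varepsilon_n}) = O(n^{-2}\cdot 2^{-n})
\]
for a single fixed level-$n$ interval $I$. Note that by level comparability $s_n \leq \lambda^n$, hence $s_n^{-\varepsilon_n} \geq n^{4\log(1/\lambda)}$: a polynomial in $n$ of fixed positive degree.

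My approach is to decompose $|I| = \sum_{k\geq 1} L_k$, where $L_k$ is the total length of level-$(n+k)$ gaps falling in $I$. Since $a_{2^{n+k-1}} \leq C s_{n+k-1}$, I bound $L_k \leq C X_k s_{n+k-1}$, with $X_k$ the number of level-$(n+k)$ gaps in $I$. The crucial fact is that, conditional on the random sub-interval structure of $I$ at level $n+k-1$ (comprising $Y_{k-1}$ sub-intervals; $Y_0=1$), the model gives $X_k \sim B(2^{n+k-1}, Y_{k-1}/2^{n+k-1})$ with conditional mean $Y_{k-1}$, and $Y_k = Y_{k-1}+X_k$. Choose $c_0$ with $1 < c_0 < 4\log(1/\lambda)$ and let $\mathcal{G}_n := \{Y_k \leq 2^k n^{c_0}\text{ for all } k\geq 0\}$. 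On $\mathcal{G}_n$, using $s_{n+k-1} \leq \lambda^{k-1} s_n$,
\[
|I| \leq C\sum_{k\geq 1} X_k s_{n+k-1} \leq 2Cn^{c_0} s_n \sum_{k\geq 1}(2\lambda)^{k-1} \leq 3Cs_n^{1-\varepsilon_n}
\]
for $n$ sufficiently large, by the choice of $c_0$ and the bound on $s_n^{-\varepsilon_n}$ above.

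It remains to show $\mathbb{P}(\mathcal{G}_n^c) = O(n^{-2} 2^{-n})$, by induction on $k$. Assuming $Y_{k-1} \leq 2^{k-1} n^{c_0}$, the conditional distribution of $X_k$ is stochastically dominated by $B(2^{n+k-1}, n^{c_0}2^{-n})$ with conditional mean $\leq 2^{k-1}n^{c_0}$. For ``deep'' $k$ where this mean exceeds $200$, Corollary~\ref{prob} yields
\[
\mathbb{P}\bigl(X_k > 2\cdot 2^{k-1}n^{c_0}\,\bigm|\, Y_{k-1}\bigr) \leq \exp(-c_1 2^{k-1}n^{c_0}),
\]
maintaining $Y_k \leq 2^k n^{c_0}$ with tiny failure probability. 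For ``shallow'' $k$ where the conditional mean is $O(1)$, apply the elementary Chernoff bound $\mathbb{P}(B(N,p)\geq t)\leq (eNp/t)^t$ at $t = 2^k n^{c_0}$ to obtain super-polynomial decay in $n$. Summing over $k$ yields $\mathbb{P}(\mathcal{G}_n^c) \leq \exp(-c_2 n^{c_0}) = o(n^{-2}2^{-n})$ since $c_0 > 1$, which completes the proof after union bounding over $n$.

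The main technical obstacle is the recursive structure: a Chernoff tail for $X_k$ requires prior control of $Y_{k-1}$, and $Y_k$ is in turn built from $X_k$. The inductive slack $n^{c_0}$ propagates cleanly, but the shallow range $k \lesssim \log n$ (with $O(1)$ conditional means, to which the Gaussian-tail Corollary~\ref{prob} does not directly apply) must be handled by the separate Cram\'er--Chernoff-type bound above. A secondary subtlety is the exchangeability of the $2^n$ level-$n$ intervals used for the union bound, which follows from the fact that the random insertion model treats all $2^n$ slots identically at every level of the construction.
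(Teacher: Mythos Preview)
Your Borel--Cantelli strategy and the level-by-level decomposition are sound, and you are more explicit than the paper about the recursive conditional structure of the gap counts. The paper's proof is organised differently: it truncates at a deep level $Dn$, groups the shallow levels $n+1,\ldots,n(1+\varepsilon_n/2)$ into a single block $\Lambda_0$, treats each remaining level separately, and applies Corollary~\ref{prob} directly to each $\Lambda_k^{(j)}$ as though it were binomial $B(2^{\,\text{level}-1},2^{-n})$, without tracking the recursion through $Y_{k}$.

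However, your inductive step does not close as written. Under the hypothesis $Y_{k-1}\le 2^{k-1}n^{c_0}$, your dominating binomial for $X_k$ has mean exactly $2^{k-1}n^{c_0}$, and the event you control is $\{X_k>2\cdot 2^{k-1}n^{c_0}\}$. On its complement you only obtain
\[
Y_k=Y_{k-1}+X_k\le 2^{k-1}n^{c_0}+2^{k}n^{c_0}=3\cdot 2^{k-1}n^{c_0},
\]
not $Y_k\le 2^{k}n^{c_0}$; so the envelope $2^{k}n^{c_0}$ is not maintained. If instead you tried to force $X_k\le 2^{k-1}n^{c_0}$, that is merely the event that a binomial does not exceed its (upper bound on the) mean, which has probability of order $1/2$ rather than exponentially small. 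Iterating, the envelope that actually propagates is $3^{k}n^{c_0}$, and then the sum $\sum_k 3^{k}\lambda^{k-1}$ need not converge since we only know $\lambda<1/2$.

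The fix is to build multiplicative slack into the envelope. Choose $\delta>0$ with $2(1+\delta)\lambda<1$ (possible since $\lambda<1/2$) and set $\mathcal{G}_n=\{Y_k\le (2(1+\delta))^{k}n^{c_0}\text{ for all }k\}$. The inductive step then requires only $X_k\le (1+2\delta)$ times the bound on its conditional mean, which a standard Chernoff bound (or Corollary~\ref{prob}) controls with failure probability $\exp(-c\delta^{2}(2(1+\delta))^{k-1}n^{c_0})$; these sum over $k$ to $\exp(-c'n^{c_0})=o(2^{-n}n^{-2})$. On $\mathcal{G}_n$ one gets
\[
|I|\le C\sum_{k\ge 1}(2(1+\delta))^{k}n^{c_0}\lambda^{k-1}s_n\;\lesssim\; n^{c_0}s_n\;\le\; C''\,s_n^{1-\varepsilon_n}
\]
for large $n$, using $c_0<4\log(1/\lambda)$. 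This recovers the lemma (with a possibly different absolute constant in place of $3$, which is harmless for the application).
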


\begin{proof}
Choose $D$ such that $(2\lambda )^{D}\leq \tau ,$ so that $2^{Dn}s_{Dn}\leq
s_{n}$. Let $I_{n}^{(j)}$ be an interval of level $n$ in $E$. Denote 
\begin{eqnarray*}
\Lambda _{0}^{(j)} &=&\#\text{ gaps in }I_{n}^{(j)}\text{ from levels }%
n+1,...,n(1+\varepsilon _{n}/2)\text{ and} \\
\Lambda _{k}^{(j)} &=&\#\text{ gaps in }I_{n}^{(j)}\text{ from levels }%
k=1+n(1+\varepsilon _{n}/2),..,Dn.
\end{eqnarray*}%
Since any gap of level $i$ has length at most $a_{2^{i-1}}\leq Cs_{i}$, the
length of an interval $I_{n}^{(j)}$ of level $n$ is bounded by%
\begin{equation*}
L_{j}=\text{length}(I_{n}^{(j)})\leq C\left( \Lambda
_{0}^{(j)}s_{n}+\sum_{k=1}^{Dn-n(1+\varepsilon _{n}/2)}\Lambda
_{k}^{(j)}s_{n(1+\varepsilon _{n}/2)+k}+2^{Dn}s_{Dn}\right) .
\end{equation*}%
Hence, if $L_{j}>3Cs_{n}^{1-\varepsilon _{n}},$ then since $%
\sum_{k>Dn}2^{k}s_{k}\leq s_{n}^{1-\varepsilon _{n}},$ either $\Lambda
_{0}^{(j)}s_{n}>s_{n}^{1-\varepsilon _{n}},$ or for some $%
k=1,...,Dn-n(1+\varepsilon _{n}/2)$ $,$%
\begin{equation*}
\Lambda _{k}^{(j)}s_{n(1+\varepsilon _{n}/2)+k}\geq \frac{%
s_{n}^{1-\varepsilon _{n}}}{Dn}.
\end{equation*}%
In other words, either 
\begin{equation*}
\Lambda _{0}^{(j)}\geq s_{n}^{-\varepsilon _{n}}\text{, }
\end{equation*}%
or for some $k\leq Dn-n(1+\varepsilon _{n}/2)$, 
\begin{equation*}
\Lambda _{k}^{(j)}\geq \frac{s_{n}^{1-\varepsilon _{n}}}{Dns_{n(1+%
\varepsilon _{n}/2)+k}}.
\end{equation*}%
In comparison, the expected value of $\Lambda _{0}^{(j)}\sim 2^{n\varepsilon
_{n}/2}=n^{\log 4}<<s_{n}^{-\varepsilon _{n}}$ and the expected value of $%
\Lambda _{k}^{(j)}\sim 2^{k+n\varepsilon _{n}/2}=2^{k}n^{\log 4},$ while 
\begin{eqnarray*}
\frac{s_{n}^{1-\varepsilon _{n}}}{Dns_{n(1+\varepsilon _{n}/2)+k}} &\geq &%
\frac{s_{n}2^{n\varepsilon _{n}}}{Dns_{n(1+\varepsilon _{n}/2)+k}}\geq \frac{%
2^{3n\varepsilon _{n}/2+k}}{Dn}=\frac{2^{6\log n+k}}{Dn} \\
&\sim &2^{k}n^{6\log 2-1}>>2^{k}n^{\log 4}.
\end{eqnarray*}%
Appealing to Corollary \ref{prob}, we deduce that%
\begin{equation*}
\mathcal{P}\left( \Lambda _{0}^{(j)}s_{n}\geq s_{n}^{1-\varepsilon
_{n}}\right) \leq \exp (-cn^{\log 4})
\end{equation*}%
and%
\begin{equation*}
\mathcal{P}\left( \Lambda _{k}^{(j)}s_{n(1+\varepsilon _{n}/2)+k}\geq
s_{n}^{1-\varepsilon _{n}}/Dn\right) \leq \exp (-c2^{k}n^{\log 4}).
\end{equation*}%
Thus the probability that $L_{j}$ is more than $3Cs_{n}^{1-\varepsilon _{n}}$
is at most%
\begin{equation*}
\sum_{k=0}^{Dn}\exp (-c2^{k}n^{\log 4})\lesssim \exp (-cn^{\log 4}).
\end{equation*}%
Therefore the probability that any of the $2^{n}$ intervals at level $n$ has
length exceeding $3Cs_{n}^{1-\varepsilon _{n}}$ is at most $2^{n}\exp
(-cn^{\log 4}),$ and that decays exponentially in $n$. Applying the Borel
Cantelli lemma it follows that for almost all $E\in \mathcal{C}_{a}$, all
intervals of level $n$ have length at most $3Cs_{n}^{1-\varepsilon _{n}},$
for large enough $n$.
\end{proof}

\begin{theorem}
\label{aslowerlarge} Let $a$ be a level comparable sequence. For a.e. $E\in 
\mathcal{C}_{a}$ we have \underline{$\dim $}$_{\Phi }E=$ \underline{$\dim $}$%
_{\Phi }C_{a}$ if 
\begin{equation*}
\Phi (x)>>\frac{\log \left\vert \log (x)\right\vert }{|\log x|}\text{ for }x%
\text{ near }0,
\end{equation*}%
equivalently, $\phi (n)>>\log n$.
\end{theorem}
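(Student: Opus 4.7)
By Theorem \ref{Recall}, $\underline{\dim}_\Phi E \leq \underline{\dim}_\Phi C_a$ already holds deterministically for every $E \in \mathcal{C}_a$, so the task is to establish the reverse inequality almost surely. Fix $d < d' < \underline{\dim}_\Phi C_a$; the Cantor-set formula (\ref{LowerTheta}) supplies constants $k_0, c_0$ with $(s_k/s_{k+m})^{d'} \leq c_0\, 2^m$ for all $k \geq k_0$ and $m \geq \phi(k)$. We will show that a.s.\ there is a constant $c_2>0$ such that for every admissible pair $r \leq R^{1+\Phi(R)}$ with $R$ small, some $x\in E$ satisfies $N_r(B(x,R)\cap E)\geq c_2(R/r)^d$.

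The main probabilistic ingredient is a Borel--Cantelli estimate dual to the one powering Theorem \ref{TheoremA}. Writing $\Lambda(I,n,m)$ for the number of gaps from levels $n+1,\ldots,n+m$ placed in a given level-$n$ interval $I$, the random model makes $\Lambda(I,n,m)$ binomial with expectation $\sim 2^m$, exactly as in the proof of Theorem \ref{TheoremA}. Corollary \ref{prob} then gives $\mathcal{P}(\Lambda(I,n,m) < c_1 2^m) \leq \exp(-c\,2^m)$ for a suitable $c_1>0$. Summing over the $2^n$ step-$n$ intervals and over $m \geq \phi(n)$ yields total failure probability
\[
\lesssim 2^n \sum_{m\geq \phi(n)}\exp(-c\, 2^m) \lesssim 2^n \exp(-c\, 2^{\phi(n)}),
\]
which is summable in $n$ precisely because $\phi(n) \gg \log n$ forces $2^{\phi(n)} \gg n$. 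Borel--Cantelli therefore guarantees that a.s., for $n$ large, every step-$n$ interval $I$ satisfies $\Lambda(I,n,m) \geq c_1 2^m$ simultaneously for all $m \geq \phi(n)$.

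To transfer this into a lower bound for $N_r(B(x,R)\cap E)$, combine it with the max-interval-length lemma just proved: a.s., for $n$ large, every step-$n$ interval has length at most $3Cs_n^{1-\varepsilon_n}$. Given small $R$ with $r \leq R^{1+\Phi(R)}$, let $n = n(R)$ be the largest integer with $3Cs_n^{1-\varepsilon_n} \leq R$. Then any $x \in E$ automatically satisfies $I_n(x) \subseteq B(x,R)$, and level-comparability gives $R \leq 3Cs_{n-1}^{1-\varepsilon_{n-1}} \lesssim s_n \cdot n^{C'}$ for a constant $C'$ depending only on $\tau,\lambda$ (this polynomial slack comes from $s_n^{-\varepsilon_n}$). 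Monotonicity of $R^{1+\Phi(R)}$, together with $\Phi(R) \sim \Phi(s_n)$ since the polynomial discrepancy is negligible on the log scale, places $r$ at a scale $s_{n+m+1}\leq r < s_{n+m}$ with $m \geq \phi(n) - O(\log n) \gg \log n$. Each of the $\geq c_1 2^m$ gaps in $I_n(x)$ from levels $\leq n+m$ has length comparable to $s_{n+m}$, which exceeds $2r$ because $s_{n+m}/s_{n+m+1} \geq 1/\lambda > 2$, so these gaps split $E \cap B(x,R)$ into distinct components mutually separated by more than $2r$. Hence $N_r(B(x,R)\cap E) \geq c_1 2^m$.

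Finally, the Cantor formula gives $2^m \geq c_0^{-1}(s_n/s_{n+m})^{d'}$, while the scale relation yields $(R/r)^d \lesssim n^{C'd}(s_n/s_{n+m})^d$, so
\[
\frac{c_1 2^m}{(R/r)^d} \gtrsim \frac{(s_n/s_{n+m})^{d'-d}}{n^{C'd}} \gtrsim \frac{\lambda^{-m(d'-d)}}{n^{C'd}} \longrightarrow \infty
\]
because $d' > d$ and $m \gg \log n$. Thus $N_r(B(x,R)\cap E) \gtrsim (R/r)^d$ uniformly in $R,r$, which gives $\underline{\dim}_\Phi E \geq d$; taking $d \uparrow \underline{\dim}_\Phi C_a$ along a countable sequence completes the proof. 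The principal obstacle is this final scale-matching: the polynomial slack $n^{C'}$ introduced by $s_n^{-\varepsilon_n}$ in the max-interval-length lemma must be absorbed by the exponential gain $\lambda^{-m(d'-d)}$, and this absorption requires both the strict inequality $d' > d$ below $\underline{\dim}_\Phi C_a$ and the hypothesis $\phi(n) \gg \log n$ rather than merely $\phi(n)\to\infty$.
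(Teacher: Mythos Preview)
Your approach mirrors the paper's: both invoke Theorem~\ref{Recall} for one inequality, use the max-interval-length lemma to replace $B(x,R)$ by a step-$n$ interval, lower-bound $N_r$ by a gap count controlled via Corollary~\ref{prob}, and close with Borel--Cantelli. The only substantive difference is that the paper counts gaps at a \emph{single} deep level $m+\phi(m)+k-J$ (where $m$ is the index with $s_{m+1}\leq R\leq s_m$), whereas you count gaps cumulatively across levels $n+1,\ldots,n+m$; both random variables are binomial with expectation of the same order, so this is a matter of bookkeeping rather than a different route.

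Two technical points deserve tightening. First, gaps from level exactly $n+m$ have length only $\gtrsim s_{n+m}$, and since $r$ can lie arbitrarily close to $s_{n+m}$ your claim that each such gap exceeds $2r$ (justified via $s_{n+m}/s_{n+m+1}\geq 1/\lambda>2$) does not follow; the easy fix, as in the paper, is to shift by a fixed constant $J$ and count only gaps from levels $\leq n+m-J$, which have length $\geq (1-2\lambda)\lambda^{-J}s_{n+m}>2r$ for $J$ large. Second, your Borel--Cantelli runs over $m\geq\phi(n)$, but the scale-matching actually yields $m\geq \phi(m_R)-O(\log n)$ with $m_R$ the level at scale $R$, and your appeal to ``$\Phi(R)\sim\Phi(s_n)$'' to identify $\phi(m_R)$ with $\phi(n)$ is unjustified without a regularity assumption on $\Phi$ that the paper does not impose. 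The paper sidesteps this by anchoring both the Cantor-formula comparison and the probabilistic estimate at $m_R$ rather than at $n$; alternatively you can run Borel--Cantelli over all $m\geq g(n)$ with $g(n):=\min_{|p-n|\leq C\log n}\phi(p)-C\log n$, which still satisfies $g(n)\gg\log n$ by hypothesis and hence keeps $2^n\exp(-c\,2^{g(n)})$ summable.
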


By the same reasoning as Corollary \ref{qA} we have

\begin{corollary}
For almost all rearrangements $E\in \mathcal{C}_{a},$ dim$_{qL}E=\dim
_{qL}C_{a}$.
\end{corollary}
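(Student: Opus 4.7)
The plan is to prove $\underline{\dim}_\Phi E \geq \underline{\dim}_\Phi C_a$ almost surely, since Theorem \ref{Recall} already gives the reverse inequality under the level comparable hypothesis. Fix $d < \underline{\dim}_\Phi C_a$ and pick $\beta$ with $d < \beta < \underline{\dim}_\Phi C_a$; by the Cantor set formula (\ref{LowerTheta}), there exists $c_0 > 0$ with $(s_k/s_{k+m})^\beta \leq c_0 2^m$ for all $k \geq k_0$ and $m \geq \phi(k)$.

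The central probabilistic claim is that almost surely, for every sufficiently large $n$, every level-$n$ interval $I_n^{(j)}$ in the construction of $E$, and every $m \geq \phi(n)$, the number $T_j(n,m)$ of gaps of levels $n+1,\ldots,n+m$ falling in $I_n^{(j)}$ satisfies $T_j(n,m) \geq 2^m/2$. Exactly as in the proof of Theorem \ref{asupperlarge}, $T_j(n,m)$ is binomially distributed with parameters $(2^{n+m}-2^n, 2^{-n})$ and mean close to $2^m$. Since $m \geq \phi(n) \gg \log n$ forces $2^m$ to eventually exceed $200$, Corollary \ref{prob} gives $\mathcal{P}(T_j(n,m) < 2^m/2) \leq \exp(-c\, 2^m)$. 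Summing over $n$, the $2^n$ intervals at level $n$, and $m \geq \phi(n)$ yields
\begin{equation*}
\sum_n 2^n \sum_{m \geq \phi(n)} \exp(-c\, 2^m) \lesssim \sum_n 2^n \exp(-c\, 2^{\phi(n)}) < \infty,
\end{equation*}
the convergence coming from the fact that $\phi(n) \gg \log n$ makes $2^{\phi(n)}$ grow faster than every polynomial in $n$. A Borel--Cantelli argument then yields the claim.

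To convert the claim into a lower bound on covering numbers, I use the elementary observation that $\sum_{j=1}^{2^n} |I_n^{(j)}| = \sum_{i \geq 2^n} a_i = 2^n s_n$, so at least one level-$n$ interval has length at most $s_n$. Given small $R$, pick $n$ with $s_n \leq R < s_{n-1}$, choose a level-$n$ interval $I_n^{(j)}$ with $|I_n^{(j)}| \leq s_n \leq R$, and let $x$ be any point of $E \cap I_n^{(j)}$, so $I_n^{(j)} \subseteq B(x,R)$. For $r \leq R^{1+\Phi(R)}$, let $m$ be the largest integer with $s_{n+m} \geq r$; by level comparability, $R \sim s_n$ and $r \sim s_{n+m}$, and the monotonicity of $R \mapsto R^{1+\Phi(R)}$ together with the definition of $\phi$ ensures $m \geq \phi(n)$ up to a bounded shift that can be absorbed into constants. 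The claim, the fact that $I_n^{(j)}$ contains $T_j(n,m)+1$ disjoint level-$(n+m)$ intervals separated by gaps of size $\gtrsim s_{n+m} \gtrsim r$, and (\ref{LowerTheta}) then combine to give
\begin{equation*}
N_r(B(x,R) \cap E) \geq N_r(I_n^{(j)} \cap E) \gtrsim T_j(n,m) + 1 \geq \frac{2^m}{2} \geq \frac{(s_n/s_{n+m})^\beta}{2c_0} \geq \frac{(s_n/s_{n+m})^d}{2c_0} \gtrsim (R/r)^d,
\end{equation*}
using $\beta > d$ together with $s_n/s_{n+m} \geq 1$. Hence $\sup_{x \in E} N_r(B(x,R) \cap E) \gtrsim (R/r)^d$ uniformly for small $R$, so $\underline{\dim}_\Phi E \geq d$ almost surely; letting $d \nearrow \underline{\dim}_\Phi C_a$ completes the proof.

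The main technical obstacle will be verifying that $m \geq \phi(n)$ (modulo a bounded shift) for \emph{every} admissible $r \in (0, R^{1+\Phi(R)}]$; this amounts to a careful comparison between $R^{1+\Phi(R)}$ and $s_n^{1+\Phi(s_n)}$ via the level comparability hypothesis together with the decreasing property of $x^{1+\Phi(x)}$, and is what forces the Borel--Cantelli estimate to be uniform in $m$ rather than at a single optimal scale.
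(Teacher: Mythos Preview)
Your argument has a genuine gap that stems from reading the definition of $\underline{\dim}_\Phi$ too literally. The $\sup_{x\in E}$ appearing in the displayed definition is a typo: the lower $\Phi$-dimension (like the lower Assouad dimension) requires the covering bound $N_r(B(x,R)\cap E)\geq c(R/r)^\alpha$ to hold for \emph{every} $x\in E$, i.e.\ one needs $\inf_{x\in E}N_r(B(x,R)\cap E)\geq c(R/r)^\alpha$. This is confirmed by the paper's remark that any set with an isolated point has $\underline{\dim}_\Phi E=0$, and by the opening line of the proof of Theorem~\ref{aslowerlarge}, which explicitly says the task is to bound $N_r(B(x,R)\cap E)$ from below ``for all $x\in E$''.

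Your probabilistic claim (that $T_j(n,m)\geq 2^{m}/2$ for \emph{every} level-$n$ interval $I_n^{(j)}$) is fine and is indeed the right raw material. The problem is the conversion step: you only exploit it for one particular interval, the one with $|I_n^{(j)}|\leq s_n$, and then take $x$ inside that interval so that $I_n^{(j)}\subseteq B(x,R)$. For an arbitrary $x\in E$ lying in a \emph{long} level-$n$ interval, there is no reason for $B(x,R)$ to contain that interval, and so your gap count gives no information about $N_r(B(x,R)\cap E)$. The inequality you derive is therefore only $\sup_{x\in E}N_r(B(x,R)\cap E)\gtrsim (R/r)^d$, which is not what $\underline{\dim}_\Phi E\geq d$ means.

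This is precisely why the paper precedes Theorem~\ref{aslowerlarge} with the lemma showing that almost surely every level-$n$ interval has length at most $3Cs_n^{1-\varepsilon_n}$ with $\varepsilon_n=(4\log n)/n$. That lemma (whose proof is another Borel--Cantelli argument of the same flavour as yours) is what allows one to choose $R\sim s_n^{1-\varepsilon_n}$ and conclude $I_n^{(j)}\subseteq B(x,R)$ for \emph{every} $x$ and \emph{every} $j$; the slight mismatch between $R$ and $s_n$ is then absorbed because $n-m\sim\log n\ll \phi(n)$. Without something of this sort, your approach cannot be completed. Finally, note that the Corollary itself is just the $\Phi\equiv\delta$, $\delta\to 0$ consequence of Theorem~\ref{aslowerlarge}; you should state that reduction explicitly.
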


\begin{proof}[Proof of Theorem \ref{aslowerlarge}]
Choose $d<\underline{\dim }_{\Phi }C_{a}$. We will show that \underline{$%
\dim $}$_{\Phi }E\geq d$ a.s. Since it was already seen in Theorem 4.3 of 
\cite{GHMPhi} (see Theorem \ref{Recall}) that always \underline{$\dim $}$%
_{\Phi }E\leq $ \underline{$\dim $}$_{\Phi }C_{a},$ this will complete the
proof.

From the previous lemma, we know that for all $E\in \Omega ^{\prime },$ a
subset of the probability space $\Omega $ with full measure, all intervals
at level $n$ (in the construction of $E)$ have length at most $%
3Cs_{n}^{1-\varepsilon _{n}}$ for $\varepsilon _{n}=(4\log n)/n$ and for $n$
sufficiently large. Our task is to prove that for almost every $E\in \Omega
^{\prime }$, we have $N_{r}(B(x,R)\tbigcap E)\geq \left( \frac{R}{r}\right)
^{d}$ for all $x\in E,$ small $R$ and $r\leq R^{1+\Phi (R)}$.

It suffices to consider $R=R_{n}=3Cs_{n}^{1-\varepsilon _{n}}$ (where $C$ is
as in the previous lemma) since the definition of $\varepsilon _{n}$ implies
that there are positive constants $a,b$ such that $a\leq
s_{n}^{1-\varepsilon _{n}}/s_{n+1}^{1-\varepsilon _{n+1}}\leq b$. Choose $m$
such that $s_{m+1}\leq R\leq s_{m}$. Notice that as $s_{n}/s_{m}\in \lbrack
\tau ^{n-m},\lambda ^{n-m}]$ and $s_{n}^{\varepsilon _{n}}\in \lbrack \tau
^{4\log n},\lambda ^{4\log n}]$, $n-m\sim \log n$.

We may assume $r=r_{n}=s_{m+\phi (m)+k}$ for some $k\geq 0$.

If $x$ belongs to the level $n$ interval $I_{n}^{(j)}$, then $%
B(x,R)\supseteq I_{n}^{(j)}$. Hence it will be enough to show that 
\begin{equation*}
N_{r}(I_{n}^{(j)}\tbigcap E)\gtrsim \left( \frac{s_{m}}{s_{m+\phi (m)+k}}%
\right) ^{d}\sim \left( \frac{R}{r}\right) ^{d}\text{ }
\end{equation*}%
for all large $n$.

From the formula for the lower $\Phi $-dimension of $C_{a}$ (\ref{LowerTheta}%
), we know that for $\delta >0$ chosen such that $d+\delta <\underline{\dim }%
_{\Phi }C_{a}$ and large enough $m$, 
\begin{equation*}
\left( \frac{s_{m}}{s_{m+\phi (m)+k}}\right) ^{d+\delta }\leq 2^{\phi (m)+k},
\end{equation*}%
thus it will be enough to check that 
\begin{equation*}
N_{r_{n}}(I_{n}^{(j)}\tbigcap E)\gtrsim 2^{(\phi (m)+k)d/(d+\delta )}.
\end{equation*}

Choose $J$ such that $a_{2^{i-J}}\geq 2s_{i}$ for all $i$. Then $%
N_{r_{n}}(I_{n}^{(j)}\tbigcap E)$ will be at least the number of gaps of
level $m+\phi (m)+k-J$ contained in $I_{n}^{(j)}$ as such gaps have length
at least $2r_{n}$. The expected number of gaps of level $m+\phi (m)+k-J$
contained in $I_{n}^{(j)}$ is at least%
\begin{equation*}
\frac{2^{m+\phi (m)+k-J}}{2^{n}}\sim 2^{\phi (m)+k-b_{n}\log n}
\end{equation*}%
for some $b_{n}$ bounded above and below from $0$. Since $\phi (m)=f(m)\log
m $ where $f(m)\rightarrow \infty $, 
\begin{equation*}
\frac{2^{\phi (m)+k-b_{n}\log n}}{2^{(\phi (m)+k)d/(d+\delta )}}\geq
2^{-b_{n}\log n}2^{\delta (f(m)\log m)/(d+\delta )}\rightarrow \infty
\end{equation*}%
as $m$ $\rightarrow \infty $ (or equivalently, $n\rightarrow \infty $ since $%
m\sim n$). Corollary \ref{prob} implies that the probability that $%
N_{r}(I_{n}^{(j)}\tbigcap E)<2^{(\phi (m)+k)d/(d+\delta )}$ for some $k\geq
1 $ and $j=1,...,2^{n}$ is at most 
\begin{eqnarray*}
\sum_{j=1}^{2^{n}}\sum_{k=1}^{\infty }\exp (-c2^{\phi (m)+k-b_{n}\log n})
&\lesssim &2^{n}\exp (-c2^{\phi (m)-b_{n}\log n}) \\
&=&2^{n}\exp (-c2^{f(m)\log m-b_{n}\log n})\leq \gamma ^{n}
\end{eqnarray*}%
for some $\gamma <1$ since $m\sim n$ and $f(m)\rightarrow \infty $.

Applying the Borel Cantelli lemma again, the probability that there are some 
$x_{n}$ with $N_{r_{n}}(B(x_{n},R_{n})\tbigcap E)\lesssim \left( \frac{R_{n}%
}{r_{n}}\right) ^{d}$ i.o. is zero. That completes the proof.
\end{proof}

\begin{remark}
It would be interesting to know what happens if $\phi (n)/\log n$ does not
tend to either 0 or infinity. Even for the case $\Phi =\log \left\vert \log
x\right\vert /\left\vert \log x\right\vert $ we do not know if the $\Phi $
dimensions almost surely coincide with the dimension of the Cantor set, the
dimension of the decreasing set or something else altogether.
\end{remark}

\end{document}